\newcommand{\calf}{{\cal F}}
\newcommand{\cals}{{\cal S}}
\newcommand{\calt}{{\cal T}}
\newcommand{\calu}{{\cal U}}
\newtheorem{lem}{Lemma}
\newenvironment{proof}{\noindent{\bf Proof.\,}}{\hfill$\Box$~\\}
\newenvironment{proofcite}[1]{\noindent{\bf Proof of #1.\,}}{\hfill$\Box$~\\}
\newcommand{\cala}{{\mathcal{A}}}
\newcommand{\calx}{{\mathcal{X}}}
\newcommand{\calg}{{\mathcal{G}}}
\newcommand{\calc}{{\cal C}}
\newcommand{\qed}{{\hfill$\Box$}}
\newcommand{\betai}{\beta_i}
\newcommand{\betazero}{\beta_0}
\newcommand{\Obetazero}{\overline{\beta}_0}
\newcommand{\betaone}{\beta_1}
\newcommand{\Obetaone}{\overline{\beta}_1}
\newcommand{\Obeta}{\overline{\beta}}
\newcommand{\betatwo}{\beta_2}
\newcommand{\betapar}{\beta_{\|}}
\newcommand{\Obetapar}{\overline{\beta}_{\|}}
\newcommand{\betawedge}{\beta_{\wedge}}
\newcommand{\Obetawedge}{\overline{\beta}_{\wedge}}
\newcommand{\betathree}{\beta_3}
\newcommand{\betatri}{\beta_{\triangle}}
\newcommand{\Obetatri}{\overline{\beta}_{\triangle}}
\newcommand{\betaw}{\beta_{\dashv}}
\newcommand{\Obetaw}{\overline{\beta}_{\dashv}}
\newcommand{\betapath}{\beta_{\sqcup}}
\newcommand{\Obetapath}{\overline{\beta}_{\sqcup}}
\newcommand{\alphaone}{\alpha_1}
\newcommand{\ove}{\overline{e}}
\newcommand{\ds}{\displaystyle}
\newtheorem{thm}{Theorem}
\title{$Q_{2}$-free families in the Boolean lattice}
\author{Maria Axenovich\thanks{Supported in part by NSA grant H98230-09-1-0063 and NSF grant DMS-0901008.}\\ Iowa State University\\ Ames, IA 50010\\ \texttt{axenovic@iastate.edu}\and
Jacob Manske\\ Iowa State University\\ Ames, IA 50010\\ \texttt{jmanske@iastate.edu}\and
Ryan Martin\thanks{Supported in part by NSA grant H98230-08-1-0015 and NSF grant DMS-0901008.}\\
Iowa State University\\ Ames, IA 50010\\ \texttt{rymartin@iastate.edu}}
\begin{document}
\maketitle

\begin{abstract}
For a family $\calf$  of subsets of $[n]=\{1, 2, \ldots, n\}$ ordered by inclusion,  and a partially ordered set $P$, we say that $\calf$ is $P$-free if it does not
contain a subposet isomorphic to $P$. Let $ex(n, P)$ be the largest size of a $P$-free family of subsets of $[n]$.
Let $Q_2$ be the poset with distinct elements $a, b, c, d$,  $a<b,c<d$; i.e.,  the $2$-dimensional Boolean lattice.
We  show that $2N -o(N) \leq ex(n, Q_2)\leq   2.283261N +o(N), $  where $N = \binom{n}{\lfloor n/2 \rfloor}$.
We also prove that the largest $Q_{2}$-free family of subsets of $[n]$ having  at  most three different   sizes  has at most $2.20711N$ members.
\end{abstract}

\section{Introduction}
\noindent
Let $Q_n$ be the $n$-dimensional Boolean lattice corresponding to subsets of an $n$-element set ordered by inclusion.
A poset $P = (X, \leq)$ is a subposet of $Q = (Y,\leq')$  if there is an injective map $f: X \rightarrow Y$ such that for $x_{1},x_{2} \in X$, $x_{1} \leq x_{2}$ implies $f(x_{1})\leq' f(x_{2})$.
For a poset $P$, we say that a set of elements $\calf \subseteq 2^{[n]}$ is $P$-free if   $(\calf, \subseteq)$  does not contain $P$ as a subposet.
Let $ex(n, P)$ be the size of the largest $P$-free family of subsets of $[n]$.
We say that  the set of all $i$-element subsets of $[n]$, $\binom{[n]}{i}$,  is the $i$th layer of $Q_n$.  Finally, let $N(n) = N = \binom{n}{\lfloor n/2 \rfloor }$;  i.e.,  $N$ is  the size of a middle layer of a Boolean lattice.

\noindent
The classical theorem of Sperner \cite{sperner28} says that $ex(n, Q_1) = N$.  Most  asymptotic bounds for $ex(n,P)$ are expressed in terms of $N$.
Many largest $P$-free families are simply unions of largest layers in $Q_n$.
For example,  Erd\H{o}s generalized Sperner's result in \cite{erdos45-sperner}, showing that the size of the largest subposet of  $Q_{n}$ which does not contain
a chain with $k$ elements, $C_k$,  is equal to the number of elements in the $k-1$ largest layers of $Q_n$;  i.e., for a fixed $k$, $ex(n, C_k) = (k-1)N+o(N)$.
De Bonis, Katona and Swanepoel showed  in \cite{debkatswan05} that $ex(n,\Bowtie) = 2N+o(N)$, where $\Bowtie$ is a subposet of $Q_n$ consisting of distinct sets $a, b, c, d$ such that
$a ,b \subset c, d$.
 De Bonis and Katona, as well as Thanh showed  in \cite{debkat2007}, \cite{Thanh98} that
$ex(n, V_{r+1}) = N +  o(N)$, where $V_{r+1}$ is a subposet of $Q_n$ with distinct elements $f, g_i$, $i=1,\ldots, r$,
 $f \subset g_{i}$ for $i = 1, \ldots, r$.  More generally, for a poset $K_{s,t}$,  with distinct elements $f_1, \ldots, f_s \subset g_1, \ldots, g_t$,  and
 a poset $P_k(s)$, with distinct elements $f_1 \subset \cdots \subset f_k \subset g_1, g_2, \ldots, g_s$,
  Katona and Tarjan  \cite{katonatarjan83}   and later De Bonis and Katona \cite{debkat2007} proved  that $ex(n, K_{s, t}) = 2N+o(N)$ and $ex(n, P_k(s)) = kN+o(N)$, respectively.
Griggs and Katona proved  in \cite{griggskatona08} that
$ex(n, \textsf{N}) = N + o(N)$, for a poset $\textsf{N}$ with distinct elements $a,b,c,d$, such that  $a\subset c,d$,  and $b\subset c$.
Griggs and Lu \cite{griggslu09} proved that $ex(n,P_k(s,t)) = (k-1)N+o(N)$, where $P_k(s, t)$ is a poset with distinct elements
$f_1, f_2 \ldots, f_s \subset g_2\subset g_3 \subset  \cdots \subset  g_{k-1}\subset   h_1, \ldots, h_t$, $k\geq 3$.
 They also showed that $ex(n, O_{4k}) =N+o(N)$,   $ex(n, O_{4k-2})\leq (1+ \sqrt{2}/2) N+o(N)$, where $O_i$ is a poset of height two
 which is a cycle of length $i$ as an undirected graph.  More generally, they proved that if $G=(V,E)$ is a graph and $P$ is a poset with elements $V\cup E$,
 with $v<e$ if $v\in V$, $e\in E$ and $v$ incident to $e$, then
 $ex(n,P) \leq (1- \sqrt {1 - 1/(\chi(G)-1)})N+o(N)$.
 Bukh \cite{bukh08} proved that $ex(n,T)= kN+o(N)$, where $T$ is a poset whose diagram is a tree and $k$ is an integer which is one less than the height of $T$.
 As a general reference in poset theory, see \cite{trotterbook}.

 \noindent
The smallest  poset, $P$, for which $ex(n,P)$ is not known to be an integer multiple of $N$,  is $P=Q_2$.
 This manuscript is devoted to this little poset for which we still do not know whether $ex(n, Q_2)= kN+o(N)$ for an integer $k$.
 We show that $ 2N-o(N)\leq ex(n, Q_2)   \leq   2.283261N+o(N)$. We believe that $ex(n, Q_2) = 2N+o(N)$.
 Next, are our main results.

 \begin{thm}\label{mainthm} If $\calf \subset Q_{n}$ is $Q_{2}$-free, then
$2N- o(N) \leq |\calf| \leq 2.283261N+ o(N)$.
\end{thm}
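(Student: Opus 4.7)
The plan is to handle the two bounds separately. For the lower bound $2N - o(N)$, I would exhibit the explicit family $\calf_0 = \binom{[n]}{\lfloor n/2\rfloor} \cup \binom{[n]}{\lfloor n/2\rfloor+1}$, the union of two adjacent middle layers. Any copy of $Q_2 = \{A,B,C,D\}$ in $\calf_0$ with $A \subsetneq B, C \subsetneq D$ would force $|D| \geq |A|+2$, which is impossible when all sets in $\calf_0$ have sizes differing by at most one. Hence $\calf_0$ is $Q_2$-free and $|\calf_0| = \binom{n}{\lfloor n/2\rfloor} + \binom{n}{\lfloor n/2\rfloor+1} = 2N - o(N)$.

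For the upper bound $|\calf| \leq 2.283261\, N + o(N)$, I would adopt a Lubell-style averaging argument. With $f_i = |\calf \cap \binom{[n]}{i}|$, the Lubell function $\mu(\calf) = \sum_i f_i / \binom{n}{i}$ equals the expected value of $|\calf \cap C|$ for a uniformly random maximal chain $C$ of $Q_n$, so bounding $\mu(\calf)$ by $2.283261 + o(1)$ suffices. The essential structural consequence of $Q_2$-freeness is the following: whenever $A, D \in \calf$ with $A \subsetneq D$, the set $\{B \in \calf : A \subsetneq B \subsetneq D\}$ must itself be a chain; otherwise, two incomparable such sets would combine with $A$ and $D$ to form a $Q_2$.

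Turning this into a proof, I would (i) truncate to layers of size $\Omega(N)$ — those of size within $O(\sqrt{n \log n})$ of $n/2$ — discarding the far-from-middle tails, which contribute only $o(N)$ by standard binomial tail estimates; (ii) analyze a uniformly random maximal chain by tracking not only $|\calf \cap C|$ but also auxiliary counts of $\calf$-elements sitting strictly between consecutive on-chain members of $\calf$ (the structural property forces each such interval contribution to be a chain); (iii) assemble the resulting estimates into a convex program over the normalized densities $x_i = f_i / \binom{n}{i}$, whose constraints encode the $Q_2$-free restriction; (iv) solve this program to extract the bound.

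The main obstacle is locating the correct optimization problem and solving it to produce exactly the constant $2.283261$. Unlike the clean three-sizes bound $(3+\sqrt{2})/2 \approx 2.20711$ noted in the abstract, this constant is not obviously algebraic, suggesting that the extremal profile spreads across more than three layers in an intricate way. Producing constraints sharp enough to yield $2.283261$ — and no larger — is where most of the technical difficulty is expected to concentrate, and will likely require a delicate case analysis of how $Q_2$-freeness couples triples and quadruples of layers simultaneously, rather than a single chain-level inequality.
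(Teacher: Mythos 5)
Your lower bound is fine and is exactly the paper's: the union of the two middle layers is $Q_2$-free because any copy of $Q_2$ contains a containment $A\subsetneq B\subsetneq D$ and hence two sets whose sizes differ by at least two, and its size is $2N-o(N)$. That half is complete.

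The upper bound, however, has a genuine gap: everything that actually produces the constant $2.283261$ is absent, and you acknowledge as much. The paper proceeds by noting that a $Q_2$-free family meets every full chain at most three times, which gives $|\calf|\le \left(2+\frac{1}{n!}\left(|\Upsilon_n^3(\calf)|-|\Upsilon_n^1(\calf)|\right)\right)N$; it then localizes at each minimal element $S\in\cals$, studies $\calg(S)=\{F\setminus S: S\subseteq F\in\calf\}$ (in which each set lies above at most one nonempty minimal set), encodes the $2$-element minimal sets as a graph on $\eta$ vertices, bounds the difference between chains through $S$ carrying three members and those carrying one member by a quartic polynomial in the local parameters $a=\eta/m$ and $b=e/\binom{\eta}{2}$ (Lemmas~\ref{lem:sig1} and~\ref{sigma3}), maximizes that polynomial to obtain $0.283261$ (Lemma~\ref{lem:calc}), and sums over $S$ via LYM. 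Your outline replaces all of this with an unspecified ``convex program over the normalized densities $x_i=f_i/\binom{n}{i}$,'' but layer densities alone do not see the local structure that drives the bound: the only chain-level constraint you actually derive---at most three members of $\calf$ on any maximal chain (and note your ``in-between sets form a chain'' observation is weaker than what $Q_2$-freeness gives, since under the paper's non-induced notion of subposet any two distinct sets strictly between $A$ and $D$, comparable or not, already complete a $Q_2$, so at most one such set exists)---yields only $|\calf|\le 3N+o(N)$. The gain from $3$ down to $2.283261$ comes precisely from the local comparison of $|\Upsilon_3|$ against $|\Upsilon_1|$ via the auxiliary graph and the optimization in $(a,b)$, none of which appears in your plan; indeed you state that locating and solving the correct optimization is an open obstacle. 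As written, the upper bound is a program for a proof rather than a proof.
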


\begin{thm}\label{consecthm}
Let $\calf \subset Q_{n}$ be a  $Q_{2}$-free family, $\calf= \cals\cup \calt \cup \calu$,   where $\cals$ is a subset of minimal elements of $\calf$,
$\calu$ is a subset of maximal elements of $\calf$ and $\calt = \calf\setminus (\cals\cup \calu) $ such that for any $T\in \calt$, $S\in \cals$, $U\in \calu$,  $|T| =k$, $|U|>k$, $|S|<k$.
Then $|\calf| \leq  N(3+\sqrt{2})/2 + o(N) \leq  2.20711N+o(N)$.
In particular, if $\calf$ is a $Q_2$-free  subset of three layers of $Q_n$,  then  $|\calf| \leq 2.20711N+o(N)$.
\end{thm}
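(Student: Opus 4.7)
My plan is to bound the Lubell function $L(\calg):=\sum_{G\in\calg}\binom{n}{|G|}^{-1}$; after the standard truncation of sets whose sizes are far from $n/2$ (an $o(N)$ loss), the statement reduces to proving $L(\calf)\leq (3+\sqrt{2})/2 + o(1)$, since $|\calf|\leq L(\calf)\,N\,(1+o(1))$ for families concentrated near the middle layer.

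I first exploit the structural hypotheses. Since $\cals\subseteq\min(\calf)$, $\calu\subseteq\max(\calf)$, and the size conditions $|S|<k<|U|$ place $\cals$ strictly below $\calt$ strictly below $\calu$, both $\cals$ and $\calu$ are antichains in $Q_n$, while $\calt$ lies in the single layer $\binom{[n]}{k}$. Hence $L(\cals),L(\calu),L(\calt)\leq 1$ by LYM, and $L(\calf)=L(\cals)+L(\calt)+L(\calu)$. A short case analysis on the possible layer-positions of the four vertices of a $Q_2$ copy in $\calf$, using the antichain/layer properties, shows that the only realisable pattern is $a=S\in\cals$, $\{b,c\}=\{T_1,T_2\}\subseteq\calt$ with $T_1\neq T_2$, $d=U\in\calu$, and $S\subset T_1,T_2\subset U$. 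Equivalently, $\calf$ is $Q_2$-free if and only if for every comparable pair $(S,U)\in\cals\times\calu$ there is at most one $T\in\calt$ with $S\subset T\subset U$.

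Next I apply the Lubell method with a \emph{sister chain} pairing. For a uniform random maximal chain $\calc=(C_0\subset\cdots\subset C_n)$ in $Q_n$, let $\calc'$ be the sister chain which agrees with $\calc$ on all layers except $k$, where $C_k(\calc')$ is the unique other size-$k$ set between $C_{k-1}$ and $C_{k+1}$. Set $X=\mathbf{1}[\calc\cap\cals\neq\emptyset]$, $Y=\mathbf{1}[\calc\cap\calu\neq\emptyset]$, $Z=\mathbf{1}[C_k(\calc)\in\calt]$, $Z'=\mathbf{1}[C_k(\calc')\in\calt]$. The reduction above forces $XYZZ'=0$, so pointwise $X+Y+Z+Z'\leq 3$; taking expectations and using $\mathbb{E}[Z]=\mathbb{E}[Z']=L(\calt)$ by symmetry yields the linear constraint $L(\cals)+L(\calu)+2L(\calt)\leq 3$. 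Combined only with LYM this gives the weaker bound $L(\calf)\leq 5/2$.

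The main obstacle, and the source of the $\sqrt{2}$, is extracting an additional quadratic-type inequality from the $Q_2$-free structure. My plan is to double-count triples $(S,T,U)$ with $S\in\cals,\ T\in\calt,\ U\in\calu,\ S\subset T\subset U$: by the $Q_2$-free characterization this count is at most the number of comparable pairs $(S,U)\in\cals\times\calu$, while it also equals $\sum_{T\in\calt}|\{S\in\cals:S\subset T\}|\cdot|\{U\in\calu:U\supset T\}|$. Applying Cauchy--Schwarz to the latter sum, combined with LYM applied to the antichains $\cals$ and $\calu$, should produce a constraint roughly of the form $L(\cals)+L(\calu)\leq\sqrt{2(3-2L(\calt))}$ (or an equivalent quadratic relation); together with $L(\cals)+L(\calu)+2L(\calt)\leq 3$, this forces $L(\cals)+L(\calu)\leq\sqrt{2}$ at the optimum and yields the sharp bound $L(\calf)\leq(3+\sqrt{2})/2$, attained (in the optimization) at $L(\cals)+L(\calu)=\sqrt{2}$ and $L(\calt)=(3-\sqrt{2})/2$. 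The hardest part will be identifying the exact Cauchy--Schwarz weighting that closes the gap from $5/2$ to $(3+\sqrt{2})/2$; in the general (not-three-consecutive-layer) setting one may additionally need Kruskal--Katona-type shadow inequalities to reduce the arbitrary antichains $\cals,\calu$ to the three-consecutive-layer case.
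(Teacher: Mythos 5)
Your preliminary reductions are correct and even pleasant: the observation that under the hypotheses $\cals,\calt,\calu$ are three antichains and the only realizable copy of $Q_2$ is $S\subset T_1,T_2\subset U$ with $S\in\cals$, $T_1\neq T_2\in\calt$, $U\in\calu$ is right, and your sister-chain argument does yield the valid linear constraint $L(\cals)+L(\calu)+2L(\calt)\leq 3$, hence $|\calf|\leq \frac52 N+o(N)$. But the step that actually produces the constant $(3+\sqrt2)/2$ is missing: it is announced as a Cauchy--Schwarz argument that ``should produce a constraint roughly of the form'' $L(\cals)+L(\calu)\leq\sqrt{2\left(3-2L(\calt)\right)}$, and that particular constraint provably cannot close the gap. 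Indeed, the point $L(\cals)=L(\calu)=1$, $L(\calt)=\tfrac12$ satisfies both your linear constraint ($1+1+1=3$) and your proposed quadratic one ($2\leq\sqrt{2\cdot 2}=2$), yet gives $L(\calf)=\tfrac52>\tfrac{3+\sqrt2}{2}$. So either the inequality you hope to extract must be strictly stronger (and asymmetric in the way $\cals$ and $\calu$ interact), or the optimization must be redone; as stated, the plan stalls at $5/2$. The paper's mechanism shows what kind of inequality is needed: for $X\in\cals$ with $f(X)$ sets of $\calt$ above it, the $\binom{f(X)}{2}$ joins $T_1\cup T_2$ are sets of the top layer excluded from lying above $X$ in $\calu$; double counting pairs $(X,U)\in L_1\times\calu$ and applying Jensen gives $\sum_{X\in\cals}f(X)\leq (k+1)\sqrt{|\cals|\left(\binom{n}{k}-|\calu|\right)}+o\!\left(\binom{n}{k}\right)$ and its mirror image, which after the chain count leads to maximizing $2+s+u+\sqrt{s(1-u)}+\sqrt{u(1-s)}$, whose maximum $3+\sqrt2$ occurs at $s=u=(2+\sqrt2)/4$ --- note the cross terms $\sqrt{s(1-u)}$, $\sqrt{u(1-s)}$, which your single symmetric constraint does not capture. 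Your proposed triple count $(S,T,U)$ versus comparable pairs $(S,U)$ is not obviously enough, because the number of comparable pairs in $\cals\times\calu$ has no useful a priori bound without again invoking the exclusion structure above.

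A second, smaller gap: the theorem is not restricted to three consecutive layers, and your proposal only gestures at Kruskal--Katona for the general case. The paper instead reduces to consecutive layers by shifting $\cals$ and $\calu$ along a symmetric chain decomposition (Greene--Kleitman), checking that the shift preserves $Q_2$-freeness and loses only $2\left(N-\binom{n}{k}\right)$ elements; some such reduction (or a version of the quadratic inequality valid for arbitrary antichain levels) has to be supplied before your argument covers the stated theorem.
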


\noindent
We  prove the main theorems  in Sections \ref{PROOF1} and \ref{PROOF2},
prove  supporting lemmas  in Section \ref{LEMMAS}.


\section{Proof of  Theorem \ref{mainthm} }\label{PROOF1}

{\bf Sketch of the proof.}
We consider a $Q_2$-free family, $\calf$,  of subsets of $[n]$. Using a standard argument, we assume that all members of $\calf$ have  size between $n/2-n^{2/3}$ and $n/2+n^{2/3}$.
We bound $\calf$ in terms of the number of  full chains containing exactly $3$ sets  or exactly  $1$ set of $\calf$.
In doing this, we introduce an auxiliary graph corresponding to $2$-element subsets in local  sub-lattices, express the number of chains in terms of the size of that graph, and
optimize the resulting expression.  This produces the upper bound in the statement of the theorem.   The lower bound is achieved by $\calf = \binom{[n]}{\lfloor n/2\rfloor} \cup  \binom{[n]}{\lfloor n/2 \rfloor +1}$.

\begin{figure}\label{f1}
\begin{center}
\epsffile{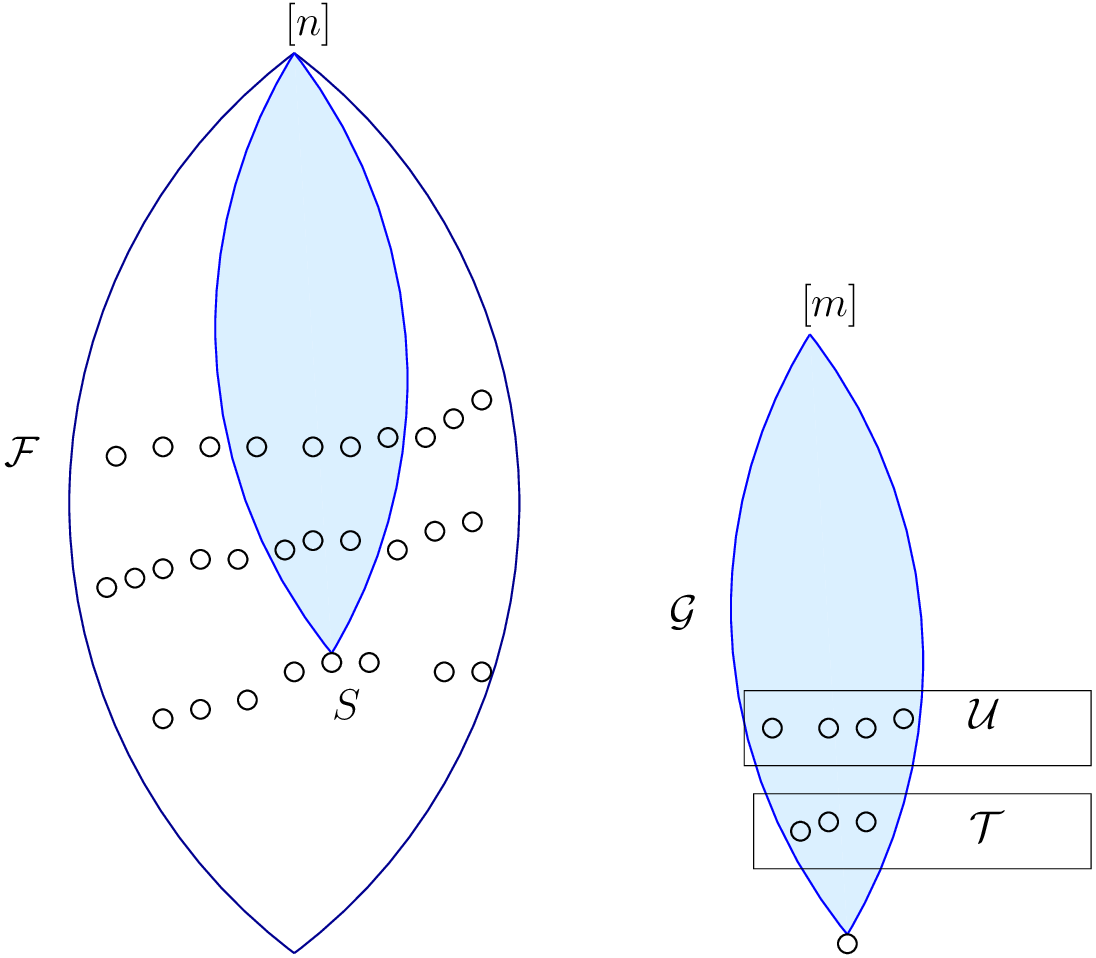}
\end{center}
\caption{Local argument}
\end{figure}

\noindent
Let us now begin the proof in full. Let $\calf $ be a $Q_2$-free family of subsets of $[n]$,  let $\cals$ be the set of  minimal elements of $\calf$.
\begin{lem}\label{lem:mid} $ \sum_{|k-n/2|\geq n^{2/3}}\binom{n}{k}\leq 2^{n-\Omega(n^{1/3})}=2^{-\Omega(n^{1/3})}N . $
\end{lem}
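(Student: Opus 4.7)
The plan is to interpret the sum probabilistically and apply a standard Chernoff/Hoeffding tail bound. Let $X \sim \mathrm{Bin}(n,1/2)$, so that $\binom{n}{k} = 2^{n}\Pr[X=k]$. Then
\begin{equation*}
\sum_{|k-n/2|\ge n^{2/3}} \binom{n}{k} \;=\; 2^{n}\cdot \Pr\!\left[\,|X-n/2|\ge n^{2/3}\,\right].
\end{equation*}
Applying Hoeffding's inequality (equivalently, the Chernoff bound for sums of independent $\pm 1/2$ Bernoulli shifts) with deviation $t=n^{2/3}$ gives
\begin{equation*}
\Pr\!\left[\,|X-n/2|\ge n^{2/3}\,\right] \;\le\; 2\exp\!\left(-\frac{2(n^{2/3})^{2}}{n}\right) \;=\; 2\exp\!\left(-2n^{1/3}\right).
\end{equation*}
Substituting back yields the first claimed bound $\sum_{|k-n/2|\ge n^{2/3}}\binom{n}{k}\le 2^{n-\Omega(n^{1/3})}$.

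For the second equality, I would use Stirling's approximation (or the elementary bound $N \ge 2^{n}/(n+1)$) to get $N = \binom{n}{\lfloor n/2\rfloor} = \Theta\!\left(2^{n}/\sqrt{n}\right)$, so $2^{n} = O(N\sqrt{n})$. Then
\begin{equation*}
2^{n-\Omega(n^{1/3})} \;=\; O(N\sqrt{n})\cdot 2^{-\Omega(n^{1/3})} \;=\; N\cdot 2^{-\Omega(n^{1/3})},
\end{equation*}
where the $\sqrt{n}$ factor is swallowed because $\sqrt{n} = 2^{O(\log n)} = 2^{o(n^{1/3})}$, which is dominated by the $2^{-\Omega(n^{1/3})}$ savings from the tail bound.

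There is no real obstacle here; the only thing to watch is that the Hoeffding exponent genuinely decays faster than any polynomial in $n$, which is what lets us absorb the $\sqrt{n}$ from Stirling. An alternative, entirely self-contained route would be to bound $\binom{n}{k}$ directly via the ratio $\binom{n}{k+1}/\binom{n}{k}=(n-k)/(k+1)$: for $k \ge n/2 + n^{2/3}$ the ratio between consecutive terms is at most $1 - \Omega(n^{-1/3})$, so a geometric-series estimate shows that the entire upper tail is at most $\binom{n}{\lfloor n/2 + n^{2/3}\rfloor}\cdot O(n^{1/3})$, and comparing this with $N$ via Stirling once more gives the factor $2^{-\Omega(n^{1/3})}$. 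I would go with the Hoeffding version for brevity.
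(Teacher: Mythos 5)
Your proof is correct and follows essentially the same route as the paper: interpret the sum as a tail probability of a $\mathrm{Bin}(n,1/2)$ variable, apply a Chernoff/Hoeffding bound with deviation $n^{2/3}$ to get an $e^{-\Omega(n^{1/3})}$ factor, and then absorb the polynomial gap between $2^n$ and $N=\Theta(2^n/\sqrt{n})$ into the exponentially small term. No issues.
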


\begin{proofcite}{Lemma~\ref{lem:mid}}
We note that the expression $2^{-n}\sum_{|k-n/2|\geq n^{2/3}}\binom{n}{k}$ computes the probability that a $B(n,1/2)$ binomial random variable, $X$,  takes on values outside of the interval $\left(n/2-n^{2/3},n/2+n^{2/3}\right)$.  Using a standard Chernoff bound,
$\Pr\left(|X-n/2|\geq \delta(n/2)\right)\leq 2\exp\left\{-(n/2)\delta^2/2\right\}$.  Observing that the left-hand side sums $\binom{n}{k}$ over all $k$ for which $|k-n/2|\geq \delta(n/2)$ and setting $\delta=2n^{-1/3}$, we can conclude: \\
$$ \sum_{|k-n/2|\geq n^{2/3}}\binom{n}{k}\leq 2^{n+1}e^{-n^{1/3}} . $$

\noindent
Since $\binom{n}{n/2}=\Omega(n^{-1/2})2^n$, we may conclude that
$ \sum_{|k-n/2|\geq n^{2/3}}\binom{n}{k}\leq 2^{-\Omega(n^{1/3})}\binom{n}{n/2} . $
Note that, for every $C$ there exists a $c$ such that $\sum_{|k-n/2|\geq cn^{1/2}\ln n}\binom{n}{k}\leq n^{-C}\binom{n}{n/2}$.  So, we could in fact  have chosen a more precise
$\Omega(n^{1/2}\ln n)$ as our error term, rather than the more convenient $n^{2/3}$.
\end{proofcite}

\noindent
As a result of Lemma~\ref{lem:mid}, we can assume  that all elements in $\calf$ are close to the middle layer. A full chain in $Q_n$ is a chain containing $n+1$ sets. For $i=1,2,3$ and a set $F\in\calf$, let $\Upsilon^i_n(F, \calf)$ denote the set of full chains in $Q_n$ that contain $F$ and exactly $i-1$ other members of $\calf$. Let $ \Upsilon^i_n( \calf)$ be the set of full chains in $Q_n$ that contain exactly $i$  members of $\calf$.

\begin{lem} \label{lem:chains}    $|\calf| \leq
   \left( 2+\frac{1}{n!} \left(|\Upsilon_n^3(\calf)|-|\Upsilon_n^1(\calf) |\right) \right)N
   \leq    \left( 2+\frac{1}{n!} \sum _{S\in \cals } \left(|\Upsilon_n^3(S, \calf)|-|\Upsilon_n^1(S, \calf) |\right) \right)N. $
\end{lem}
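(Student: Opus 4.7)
The plan is to use double counting on pairs $(F, C)$ where $F \in \calf$ and $C$ is a full chain in $Q_n$ containing $F$. Writing $c_i = |\Upsilon_n^i(\calf)|$ for the number of full chains that contain exactly $i$ members of $\calf$, each $F \in \calf$ lies in exactly $|F|!(n-|F|)! = n!/\binom{n}{|F|}$ full chains, which is at least $n!/N$ since $\binom{n}{k} \leq N$ for every $k$. Summing over $F \in \calf$ gives
$$ |\calf| \cdot \frac{n!}{N} \;\leq\; \sum_{F \in \calf} \#\{\text{full chains through } F\} \;=\; \sum_{i \geq 1} i\, c_i. $$

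The structural input, and the step I regard as the crux, is that because $\calf$ is $Q_2$-free it cannot contain any $4$-chain. Under the weak subposet convention used in the paper, a chain $F_1 \subsetneq F_2 \subsetneq F_3 \subsetneq F_4$ in $\calf$ embeds $Q_2$ via $a \mapsto F_1,\ b \mapsto F_2,\ c \mapsto F_3,\ d \mapsto F_4$, since every required comparability holds. Consequently $c_i = 0$ for all $i \geq 4$, so $\sum_{i \geq 1} i\, c_i = c_1 + 2c_2 + 3c_3$. Combining with the trivial bound $c_1 + c_2 + c_3 \leq n!$ (total number of full chains in $Q_n$), I would rewrite
$$ c_1 + 2c_2 + 3c_3 \;=\; 2(c_1 + c_2 + c_3) + (c_3 - c_1) \;\leq\; 2n! + c_3 - c_1, $$
and divide the displayed inequality by $n!/N$ to obtain the first bound $|\calf| \leq \bigl(2 + (c_3 - c_1)/n!\bigr) N$.

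To pass from this to the sum over $\cals$, observe that a chain counted by $c_3$ meets $\calf$ in three sets whose smallest element is necessarily minimal in $\calf$ and so lies in $\cals$; hence $c_3$ splits exactly as $\sum_{S \in \cals} |\Upsilon_n^3(S, \calf)|$. On the other hand, the unique $\calf$-element of a chain counted by $c_1$ need not be minimal in $\calf$, which only yields $\sum_{S \in \cals} |\Upsilon_n^1(S, \calf)| \leq c_1$. Subtracting gives
$$ c_3 - c_1 \;\leq\; \sum_{S \in \cals}\bigl(|\Upsilon_n^3(S,\calf)| - |\Upsilon_n^1(S,\calf)|\bigr), $$
which, fed back into the first inequality, produces the second bound. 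Aside from noticing the no-$4$-chain consequence of $Q_2$-freeness, the argument is a clean double count, so I would not expect any real technical obstacle.
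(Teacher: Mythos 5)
Your proposal is correct and follows essentially the same route as the paper: a double count of pairs $(F,\sigma)$ with $F\in\calf$ and $\sigma$ a full chain through $F$, using $Q_2$-freeness to cap each chain at three members of $\calf$ (the paper states this cap directly; you derive it from the no-$4$-chain observation, which is the same fact), the lower bound $n!/N$ on the number of chains through any $F$, and the observation that every chain in $\Upsilon_n^3(\calf)$ contains a (unique) minimal element of $\calf$ while chains in $\Upsilon_n^1(\calf)$ need not, giving the second inequality. No gaps; if anything your use of ``$\leq 2n!+c_3-c_1$'' is slightly more careful than the paper's equality, which silently ignores chains missing $\calf$ entirely.
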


\begin{proofcite}{Lemma \ref{lem:chains}}
\noindent
Let $\Upsilon$ be the set of all full chains in $Q_n$. 
Let $\calx= \{ (F, \sigma):  F\in \calf, \sigma\in \Upsilon, F\in \sigma \}.$
Since each $\sigma \in \Upsilon$ contains at most $3$ sets from $\calf$, we have that
$$|\calx| = 3 |\Upsilon_n^3(\calf)| + 2 |\Upsilon_n^2(\calf)| + |\Upsilon_n^1(\calf)|.$$
On the other hand, any $F\in \calf$ is contained in $|F|!(n-|F|)!\geq \lfloor n/2 \rfloor !   \lceil n/2 \rceil !$ full chains from $\Upsilon$.
Thus   $ |\calf| \lfloor n/2 \rfloor !   \lceil n/2 \rceil !  \leq |\calx| = 3 |\Upsilon_n^3(\calf)| + 2| \Upsilon_n^2(\calf)| + |\Upsilon_n^1(\calf)|.$

\noindent
Since the terms $|\Upsilon_n^i(\calf)|$ sum to $n!$,
$|\calx|  = 2n! + |\Upsilon_n^3(\calf)|-|\Upsilon_n^1(\calf)|$.
Thus,
\begin{eqnarray*}
\lfloor n/2 \rfloor !   \lceil n/2 \rceil! |\calf | & \leq  &   2n! + |\Upsilon_n^3(\calf)| - |\Upsilon_n^1(\calf)|,\\
|\calf| &\leq &   2N+ \frac{1}{\lfloor n/2 \rfloor !   \lceil n/2 \rceil! } ( |\Upsilon_n^3(\calf)| - |\Upsilon_n^1(\calf)| )   =  \left(2 + \frac{1}{n!} \left( |\Upsilon_n^3(\calf)| - |\Upsilon_n^1(\calf)| \right)\right) N.
\end{eqnarray*}
The second inequality in the lemma follows from the fact that every member of $\Upsilon_n^3(\calf)$ contains a member of $\cals$.
\end{proofcite}

\noindent
Fix  $S\in \cals$. We shall bound  $\left(|\Upsilon_n^3(S, \calf)|-|\Upsilon_n^1(S, \calf) |\right).$ Let  $\calg= \calg(S) = \{ F\setminus S:  F\in \calf, S\subseteq F\}.$
We see that $\calg$ is a system of subsets of an $m$-element set,  where $m= n-|S|$,  see Figure 1.
Moreover, $\emptyset \in \calg$, and since $\calf$ is $Q_2$-free,  for any $X\in \calg$, there is at most one set $Y\in \calg \setminus \emptyset$, such that $Y \subseteq X$.
We see also that  $|\Upsilon_n^i(S, \calf)| = |S|! |\Upsilon_{m}^i(\emptyset, \calg)|.$

\noindent
Let $\calt$ be the set of minimal elements in  $\calg-\{\emptyset\}$ and $\calu = \calg -(\calt \cup \{\emptyset\})$.
Let $\calt_i = \{ T \in \calt:   |T|=i\}$, $i=1, 2, 3, \ldots$.
Without loss of generality let $\calt_1 = \{\{\eta+1\}, \{\eta+2\}, \ldots, \{m\}\}$,   as a result  $\calt_2 $ is a set of some two-element  subsets of $[\eta]$.
We create an auxiliary graph $G$ corresponding to $\calg$ by letting the vertex set be $[\eta]$ and the edge set be $\calt_2$.
See Figure \ref{f2} for illustration.
Let $e, \ove$ be the number of edges and nonedges in $G$, respectively.
Let   $\Upsilon_i =  \Upsilon_m^i (\emptyset, \calg)$, $i=1,3$.

\begin{figure}\label{f2}
\begin{center}
\epsffile{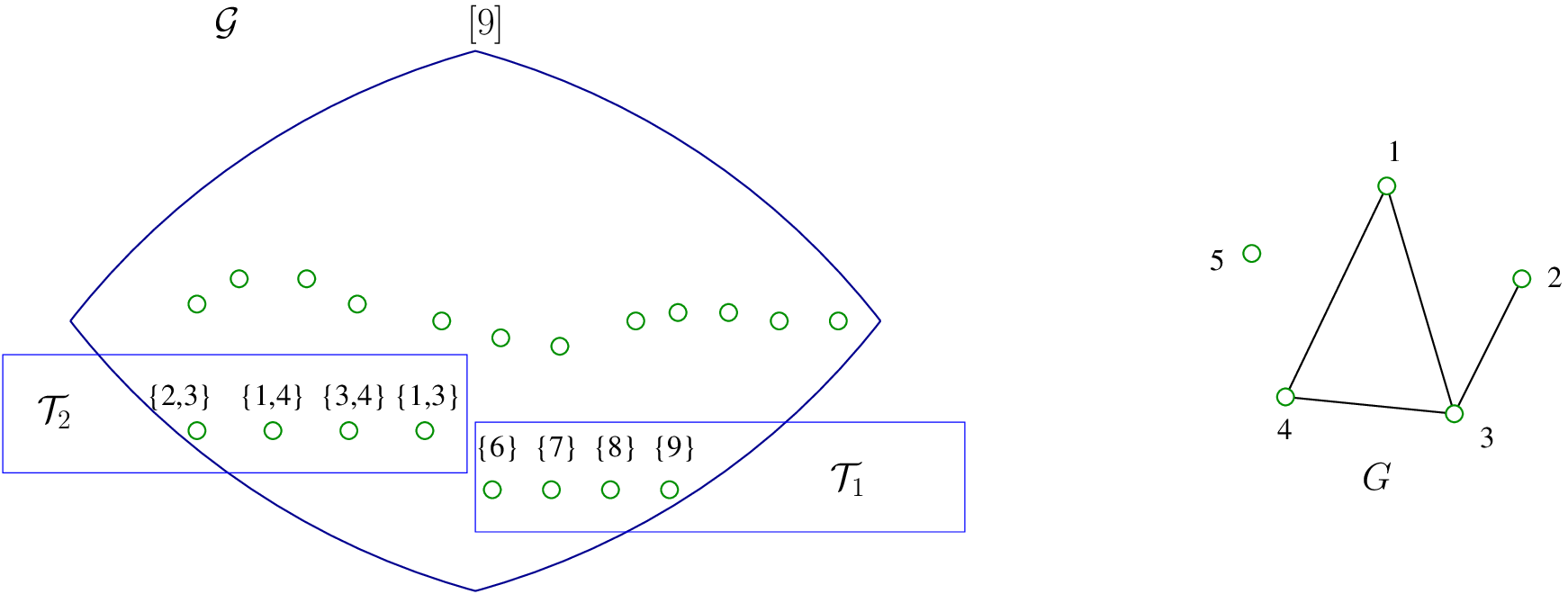}
\end{center}
\caption{Family $\calg$ and graph $G$, $m=9$, $\eta=5$.}
\end{figure}

\noindent
We shall then express the bounds on $|\Upsilon_1|$ and $|\Upsilon_3|$ in terms of proportions $a= \eta/m $ and
$b = \frac{e}{\binom{\eta}{2}}$.
Note that $ 0\leq a,b\leq 1$.
Finally, set \begin{equation}
 \mu =
 \begin{cases}
 1,& a< 1/2\\
 \frac{1-a}{a}, &   a\geq 1/2.
 \end{cases}\nonumber
 \end{equation}

\noindent
Next, we state the technical lemmas which are proved in Section \ref{LEMMAS}.

\begin{lem}\label{lem:sig1}  $  |\Upsilon_1|\geq      m!\left[   b(a^3-a^2)\mu  + (a^2-a^3)\mu  + O(m^{-1}) \right] . $
\end{lem}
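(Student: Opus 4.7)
I would encode a full chain in $Q_m$ by a permutation $\pi$ of $[m]$ with prefixes $C_i=\{\pi(1),\ldots,\pi(i)\}$; the chain lies in $\Upsilon_1$ iff no $C_i$ with $i\ge 1$ belongs to $\calg$. The plan is to exhibit, in each regime of $a$, an explicit family of permutations whose prefixes are forced to avoid $\calg$ by the $Q_2$-free structure, and then to count those permutations directly. Two structural facts drive everything. First, $\calt$ is an antichain; since $\calt_1=\{\{\eta+1\},\ldots,\{m\}\}$, this forces $\calt_k\subseteq 2^{[\eta]}$ for every $k\ge 2$. Second, each $X\in\calu$ contains exactly one $T\in\calt$, so no member of $\calg$ can contain two distinct singletons from $\{\eta+1,\ldots,m\}$, as this would place two $\calt$-members below it.

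In the regime $a<1/2$, I would restrict attention to permutations satisfying $\pi(1),\pi(2)\in[\eta]$, $\{\pi(1),\pi(2)\}$ a non-edge of $G$, and $\pi(3)\in[m]\setminus[\eta]$. The structural observations immediately give $C_1\notin\calg$ (no $\calt$-singleton inside $[\eta]$), $C_2\notin\calg$ (a non-edge in $[\eta]$ with no $\calt$-singleton below it, hence neither in $\calt_2$ nor in $\calu$), and $C_i\notin\calt$ for every $i\ge 3$, since $C_i\supseteq\{\pi(3)\}$ with $\pi(3)\notin[\eta]$ while $\calt_{\ge 2}\subseteq 2^{[\eta]}$. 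A direct count of these permutations yields $2\ove\,(m-\eta)(m-3)!=m!\,a^2(1-a)(1-b)+O((m-1)!)$, matching the claim when $\mu=1$.

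In the regime $a\ge 1/2$, I would instead use permutations with $\pi(1)\in[\eta]$ and $\pi(2),\pi(3)\in[m]\setminus[\eta]$. Then every $C_i$ with $i\ge 3$ contains two distinct singletons from $\{\eta+1,\ldots,m\}$ and is therefore outside $\calg$ by the second structural observation; only $C_1$ (trivially safe) and $C_2$ remain to check. Counting such permutations, and absorbing a non-edge adjustment, gives $m!\,a(1-a)^2(1-b)+O((m-1)!)$, which matches $m!\,a^2(1-a)(1-b)\mu$ when $\mu=(1-a)/a$. The two cases together produce the stated bound.

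\textbf{Main obstacle.} In both regimes I must subtract the chains whose prefix $C_i$ lands in $\calu$ at a level where the above structural shortcut is not available (namely $i\ge 3$ in the first regime, $i=2$ in the second). The $Q_2$-free hypothesis gives only a uniqueness statement for the $\calt$-member below each $X\in\calu$, so getting a bound on bad chains that is sharp enough to be absorbed into the $O(m^{-1})$ error is the delicate part. I expect this to require parameterizing each $X\in\calu$ by the unique $T\in\calt$ beneath it and invoking LYM-type estimates in the up-set of $T$, rather than a global bound on $|\calu|$.
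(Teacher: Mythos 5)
There is a genuine gap, and it is exactly at the point you flag as the ``main obstacle'': the chains in your two families that hit $\calu$ at a higher prefix are \emph{not} a lower-order correction, so they cannot be absorbed into the $O(m^{-1})$ term, and no LYM-type estimate in terms of $a$ and $b$ can rescue a pattern that is fixed in advance by the value of $a$ alone. Concretely, in your first regime take $G$ empty ($b=0$) and $\calu=\{\{x,y,t\}: x,y\in[\eta],\ t\in[m]\setminus[\eta]\}$; each such set contains exactly one nonempty member of $\calg$ below it (namely $\{t\}$), so this configuration is consistent with the $Q_2$-free structure, yet \emph{every} permutation with $\pi(1),\pi(2)\in[\eta]$ and $\pi(3)\notin[\eta]$ has $C_3\in\calu$, so your proposed family contributes zero chains to $\Upsilon_1$ while you claim it contributes $m!\,a^2(1-a)$. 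Symmetrically, in your second regime take $\calu=\{\{x,t\}: x\in[\eta],\ t\in[m]\setminus[\eta]\}$: then every permutation with $\pi(1)\in[\eta]$, $\pi(2)\notin[\eta]$ has $C_2\in\calu$, and the ``non-edge adjustment'' you hope will produce the factor $(1-b)$ has nothing to adjust against, since the level-$2$ failures are governed by which pairs $\{x,t\}$ lie in $\calu$, not by the edge density of $G$ on $[\eta]$. The lemma is still true in both examples, but the good chains come from the \emph{other} pattern, which shows the choice of pattern must depend on the local structure of $\calg$, not on $a$.

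The missing idea, which is how the paper proceeds, is to condition on each pair $(x,t)$ with $x\in[\eta]$, $t\in[m]\setminus[\eta]$, according to whether $\{x,t\}\in\calg$. If $\{x,t\}\in\calg$, route the chain as $\emptyset,\{x\},\{x,y\},\{x,y,t\},\ldots$ with $\{x,y\}$ a non-edge: from level $3$ on, every prefix contains the two distinct members $\{t\}$ and $\{x,t\}$ of $\calg$, so by your own uniqueness observation no later prefix can lie in $\calg$; this gives at least $\overline{d}(x)(m-3)!$ chains. If $\{x,t\}\notin\calg$, route the chain as $\emptyset,\{x\},\{x,t\},\ldots$ and bound the bad continuations: any $A\in\calg$ with $A\supsetneq\{x,t\}$ must avoid all other singletons of $[m]\setminus[\eta]$ and all edges of $G$, so its third element forces $C_3=\{x,t,z\}$ with $z$ a non-neighbour of $x$, giving at most $\overline{d}(x)(m-3)!$ bad chains and hence at least $(m-2)!-\overline{d}(x)(m-3)!$ good ones. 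Summing $\min\{\overline{d}(x)(m-3)!,\,(m-2)!-\overline{d}(x)(m-3)!\}$ over all $(x,t)$ and optimizing over degree sequences of $\overline{G}$ (this is where $\mu$ actually comes from) yields the stated bound; your plan, as written, has no mechanism that plays the role of this case distinction or of the min, so it cannot be completed in its current form.
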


\begin{lem} \label{sigma3} $  |\Upsilon_3| \leq m! \left [ b^2 \left(      a^4/2- a^3  \right)   + b\left(    a^3 - 3a^4/4 \right) + \left(  a^4/4 -a^2+a    \right)      + O(m^{-1})   \right].$
  \end{lem}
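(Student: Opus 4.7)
A full chain contributing to $\Upsilon_3$ contains $\emptyset$ together with exactly two other members $X\subsetneq Y$ of $\calg$. The starting point is the trivial upper bound
$$|\Upsilon_3|\ \leq\ \sum_{(X,Y)}|X|!\,(|Y|-|X|)!\,(m-|Y|)!,$$
in which the inner factor is the number of full chains in $Q_m$ through $\emptyset, X, Y$ and the sum runs over ordered pairs $X\subsetneq Y$ of nonempty elements of $\calg$. The $Q_2$-freeness of $\calg$ forces every element of $\calu$ to sit above a unique minimal element of $\calt$, so each such pair $(X,Y)$ falls into one of two types: type I, with $X\in \calt$ and $Y\in \calu$; or type II, with $X,Y\in \calu$ sharing a common unique minimal $T_0\in \calt$ that lies off the chain.

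For type I, the plan is to split by the size of $X$: the three cases are $X\in \calt_1$ (with $|\calt_1|=(1-a)m$), $X\in \calt_2$ (with $|\calt_2|=b\binom{\eta}{2}$), and $X\in \calt_{\geq 3}$. The $Q_2$-condition forbids $Y$ from containing any element of $\calt$ other than $X$; consequently $Y\setminus X\subseteq [\eta]$ (no other singleton from $\calt_1$) and $Y\setminus X$ is $G$-independent (no other edge from $\calt_2$). Using the identity $|X|!(|Y|-|X|)!(m-|Y|)!/m!=1/\bigl[\binom{m}{|X|}\binom{m-|X|}{|Y|-|X|}\bigr]$, the normalized contribution of each pair $(X,Y)$ with $|X|=s$ and $|Y|=u$ scales like $\binom{\eta}{u-s}/\binom{m-s}{u-s}\approx a^{u-s}$ after summing over all $X\in \calt_s$. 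Non-edge and independence constraints in $G$ supply the powers of $(1-b)$ appearing in the target polynomial, while for $X\in \calt_2$ the count of supersets of a fixed edge that contain no additional edge of $G$ requires the Cauchy--Schwarz inequality $\sum_{v\in [\eta]}d_G(v)^2\geq (2e)^2/\eta$ to extract the quadratic-in-$b$ correction, which is the source of the $b^2$-term.

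For type II, essentially the same calculation above each fixed $T_0\in \calt$ applies, with two modifications: the structural constraints on $X$ and $Y$ above $T_0$ are inherited from the type I analysis (no other $\calt$-element inside them), and the full chain is additionally required to avoid $T_0$, so the per-pair count gains the factor $1-1/\binom{|X|}{|T_0|}$. The $|T_0|=1$ subcase merges cleanly with the type I terms above singletons, and all $|T_0|\geq 2$ subcases fall into the $O(m^{-1})$ error.

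The main obstacle will be the precise bookkeeping required to collect all leading-order contributions into the polynomial $b^2(a^4/2-a^3)+b(a^3-3a^4/4)+(a^4/4-a^2+a)$. In particular, getting the exact coefficient of $b^2$ requires the Cauchy--Schwarz bound at just the right multiplicity; getting the exact coefficient of $a^4$ requires a careful inclusion--exclusion on pairs of edges of $G$ inside a $4$-element superset of an edge; and verifying that all residual contributions (from $\calt_{\geq 3}$, from larger $|Y|$, and from type II with $|T_0|\geq 2$) genuinely fall into the stated $O(m^{-1})$ error, rather than contributing at leading order, is the most delicate part of the argument.
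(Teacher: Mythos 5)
Your starting inequality is fine (in fact it is an equality), but the plan founders on the step you defer to the end. You propose to show that the contributions from $\calt_{\geq 3}$ and from pairs with $|Y|>|X|+1$ ``genuinely fall into the stated $O(m^{-1})$ error''; this is false, and no bookkeeping will verify it. Concretely, take $b=0$ and $\eta$ close to $m$, let $\calt_3\subseteq\binom{[\eta]}{3}$ be a random family in which each triple is kept with probability $1/4$, and let $\calu$ consist of all $4$-subsets of $[\eta]$ containing exactly one member of $\calt_3$; this $\calg$ satisfies the $Q_2$-derived condition, and a $27/64$ fraction of all $\binom{\eta}{4}$ four-sets lies in $\calu$, so the chains through $\calt_3$ alone contribute $\Theta(m!)$ --- on the order of the $a^4/4$ term of the target polynomial, which is present precisely to absorb this case. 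In the paper this is handled not as an error estimate but as the core of the argument: $\calu$ is first compressed to the level $|T|+1$ above each $T\in\calt$, the chains through $\calt_{\geq 3}$ are bounded by full chains through edge-free $4$-subsets of $[\eta]$ (the quantity $\betazero$), and then Lemma~\ref{lem:graph} proves a joint bound on $\alphaone+\frac{3}{m-3}\betazero$, exploiting the trade-off between the $\calt_2$ and $\calt_{\geq 3}$ contributions via identities for $\sum_v d(v)\binom{\overline{d}(v)}{2}$, $\sum_v\binom{d(v)}{3}$, etc., with the Cauchy--Schwarz step $\sum_v d^2(v)\geq 4e^2/\eta$ applied only at the end. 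Your plan budgets a single Cauchy--Schwarz inside the $\calt_2$ case and treats everything else as residual, so it cannot reach the stated polynomial; in particular the coefficient $a^4/4$ does not come from ``$4$-element supersets of an edge'' but from the $\calt_{\geq 3}$/$\betazero$ term after the cancellations in Lemma~\ref{lem:graph}.

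A smaller point: your ``type II'' pairs cannot occur. If $X\subsetneq Y$ with $X,Y\in\calu$, then the minimal $T_0\in\calt$ below $X$ gives $T_0\subsetneq X\subsetneq Y$, so $Y$ contains two distinct nonempty members of $\calg$, i.e., $S\subset S\cup T_0\subset S\cup X\subset S\cup Y$ is a $4$-chain in $\calf$, which contains $Q_2$ as a (weak) subposet. Hence every chain counted by $\Upsilon_3$ consists of $\emptyset\subset T\subset U$ with $T\in\calt$, $U\in\calu$; including an empty case is harmless for an upper bound, but the discussion of merging its $|T_0|=1$ subcase into type I suggests a misreading of the structure forced by $Q_2$-freeness, and it is exactly this structure (each $U\in\calu$ sits above a unique $T$ and $\calu$ is an antichain) that makes the compression step of the paper work.
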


\noindent
With Lemmas~\ref{lem:sig1} and~\ref{sigma3},
$$ |\Upsilon_3|-|\Upsilon_1|\leq m!\left[b^2\left(a^4/2-a^3\right)+b\left(a^3-3a^4/4-a^3\mu+a^2\mu\right) +\left(a^4/4-a^2+a-a^2\mu+a^3\mu\right) +O(m^{-1})\right] . $$

\begin{lem} \label{lem:calc}
With $0\leq a,b\leq 1$ and $\mu=\mu(a)$ as defined above,
$$ b^2\left(a^4/2-a^3\right)+b\left(a^3-3a^4/4-a^3\mu+a^2\mu\right)+\left(a^4/4-a^2+a-a^2\mu+a^3\mu\right)\leq 0.283261 . $$
\end{lem}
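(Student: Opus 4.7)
The approach is to view $f(a,b) := b^2 \alpha(a) + b\,\beta(a) + c(a)$ (writing $\alpha, \beta, c$ for the $\mu$-dependent coefficients of $b^2$, $b^1$, $b^0$ in the lemma) as a quadratic in $b$. Since $\alpha(a) = a^4/2 - a^3 = a^3(a-2)/2 \le 0$ on $[0,1]$, this quadratic is concave, so
\[
\max_{b \in [0,1]} f(a,b) \;\le\; c(a) + \frac{\beta(a)^2}{4\,|\alpha(a)|} \;=:\; g(a),
\]
with equality whenever the vertex $b^{*}(a) = -\beta(a)/(2\alpha(a))$ lies in $[0,1]$. It therefore suffices to prove $g(a) \le 0.283261$ on $a \in (0,1]$, and I would carry this out separately in the two regimes for $\mu$.

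In Case~1 ($a \le 1/2$, $\mu = 1$) one substitutes to obtain $\beta(a) = a^2(1 - 3a^2/4)$ and $c(a) = a^4/4 + a^3 - 2a^2 + a$, after which $g(a)$ is an easy algebraic function of $a$ whose maximum on $[0, 1/2]$ is well under the target (a short monotonicity argument suffices; a useful sanity check is $f(a,1) = a - a^2 \le 1/4$, and the $\mu$-dependent terms cancel in this identity). In Case~2 ($a \ge 1/2$, $\mu = (1-a)/a$), the identities $a^3\mu = a^2 - a^3$ and $a^2\mu = a - a^2$ give the cleaner expressions $\beta(a) = -3a^4/4 + 2a^3 - 2a^2 + a$ and $c(a) = a^4/4 - a^3 + a^2$, so
\[
g(a) \;=\; \frac{a^4}{4} - a^3 + a^2 \;+\; \frac{\bigl(-\tfrac{3a^4}{4} + 2a^3 - 2a^2 + a\bigr)^2}{2\,a^3(2-a)}.
\]
At the endpoints $g(1/2) = 385/1536 \approx 0.2506$ and $g(1) = 9/32 = 0.28125$, so the maximum of $g$ must lie interior to $(1/2, 1)$. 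Solving $g'(a) = 0$ yields a unique critical point near $a^{*} \approx 0.93$, at which $g(a^{*}) \approx 0.283261$, exactly the constant claimed.

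The main obstacle is entirely computational rather than conceptual: once the reduction to $g(a)$ is in hand, $g'(a) = 0$ becomes, after clearing denominators, a polynomial equation of fairly high degree in $a$ with no apparent clean factorization. A fully rigorous argument therefore requires either a carefully arranged monotonicity or unimodality proof on $[1/2, 1]$, or an interval-arithmetic bound on $g$ in a small neighborhood of $a^{*}$ combined with an easy estimate outside that neighborhood. The concavity-in-$b$ reduction and the algebraic simplifications in each case are all routine; the entire substance of the lemma is concentrated in pinning down this one-variable rational optimum.
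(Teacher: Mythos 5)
Your proposal is correct and follows essentially the same route as the paper: split into the two regimes $\mu=1$ ($a\le 1/2$) and $\mu=(1-a)/a$ ($a\ge 1/2$), substitute, and verify that the resulting polynomial expression in $a,b$ stays below $0.25$ respectively $0.283261$, with the extremum near $a\approx 0.935$, $b\approx 0.285$. Your concavity-in-$b$ reduction to a one-variable function $g(a)$ is a mild organizational refinement, and your honest deferral of the final one-variable optimum to a careful numerical or interval-arithmetic check matches the paper, which likewise simply states that the estimates ``can be checked by a symbolic manipulation program.''
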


\noindent
Using Lemma~\ref{lem:calc},
\begin{equation}\label{eq:upsbound}
   |\Upsilon_m^3(\emptyset, \calg)| - |\Upsilon_m^1(\emptyset, \calg)| \leq \left[0.283261 + O(m^{-1})\right]m!.
\end{equation}
For a final calculation, we need the so-called LYM inequality, proven by   Yamamoto   \cite{yamamotolym},   Bollob\'as  \cite{bollobaslym},  Lubell  \cite{lubelllym}, and  Meshalkin \cite{meshalkinlym}.
\begin{lem} [LYM inequality]
If $\cala$ is an antichain in $Q_n$, then $\ds\sum_{A \in \cala}\ds\binom{n}{|A|}^{-1} \leq 1.$
\end{lem}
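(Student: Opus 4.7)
The plan is to use Lubell's double-counting argument on full chains, which is the cleanest of the four independent proofs and fits naturally with the chain machinery already developed in this section (indeed, the proof of Lemma~\ref{lem:chains} is essentially a weighted refinement of this argument). Let $\Upsilon$ denote the set of all full chains in $Q_n$, so $|\Upsilon| = n!$. I would consider the bipartite incidence structure
$$ \calx = \{(A,\sigma) : A \in \cala,\ \sigma \in \Upsilon,\ A \in \sigma\} $$
and count $|\calx|$ in two ways.

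First, fix $A \in \cala$ with $|A|=k$. The full chains containing $A$ are in bijection with pairs consisting of a linear ordering of $A$ (giving the bottom $k$ sets of the chain) and a linear ordering of $[n]\setminus A$ (giving the top $n-k$ sets). Hence $A$ lies in exactly $|A|!\,(n-|A|)!$ full chains, and summing over $\cala$ gives $|\calx| = \sum_{A\in\cala} |A|!\,(n-|A|)!$.

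Second, I would use the antichain hypothesis. If a full chain $\sigma$ contained two distinct members $A,A' \in \cala$, then one would be a subset of the other, contradicting that $\cala$ is an antichain. So each $\sigma \in \Upsilon$ contributes at most $1$ to $|\calx|$, giving $|\calx| \leq |\Upsilon| = n!$. Combining the two counts,
$$ \sum_{A \in \cala} |A|!\,(n-|A|)! \leq n!, $$
and dividing through by $n!$ yields the desired inequality $\sum_{A\in\cala} \binom{n}{|A|}^{-1} \leq 1$.

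There is essentially no obstacle here: the argument is entirely self-contained once one has the bijective description of full chains through a fixed set, and the antichain condition gives the upper bound on $|\calx|$ immediately. The only mildly subtle point is verifying the chain count $|A|!(n-|A|)!$, which relies on the observation that the full chains through $A$ correspond to independent linear extensions of the two intervals $[\emptyset, A]$ and $[A, [n]]$ of $Q_n$.
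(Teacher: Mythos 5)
Your proof is correct: it is the standard Lubell chain-counting argument, and every step (the count of $|A|!\,(n-|A|)!$ full chains through a fixed $A$, and the fact that an antichain meets each full chain at most once) is justified. The paper itself does not prove this lemma but only cites it (Yamamoto, Bollob\'as, Lubell, Meshalkin), so there is nothing to compare against; note that your double count is exactly the unweighted version of the argument the paper already uses in the proof of Lemma~\ref{lem:chains}, so your write-up fits the section's machinery seamlessly.
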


\noindent
Returning to Lemma \ref{lem:chains}, we have
$$ |\calf|\leq N\left( 2+\frac{1}{n!} \sum _{S\in \cals } \left(|\Upsilon_n^3(S, \calf)|-|\Upsilon_n^1(S, \calf) |\right) \right) . $$
Using inequality (\ref{eq:upsbound}), we have
$$ |\calf|\leq N\left( 2+ \sum _{S\in \cals } \frac{1}{n!}   |S|!\left[  0.283261  + O((n-|S|)^{-1})) \right] (n-|S|)!  \right) . $$
LYM and the fact that that $(n-|S|)^{-1}\leq (n/2 - n^{2/3} )^{-1}$ give
$$ |\calf|\leq N\left( 2.283261+ O((n/2 - n^{2/3} )^{-1})\right)= 2.283261N+o(N) . $$

\noindent
This concludes the proof of the main theorem.


\section{Proof of Theorem \ref{consecthm}}\label{PROOF2}
\newcommand{\Nk}{N'}
\noindent
For ease of notation, in this proof let $\Nk=\binom{n}{k}$.  Suppose $\calf$ is a $Q_{2}$-free family from 3 layers, $L_1, L_2, L_3$,  of the Boolean lattice $Q_n$, where $L_1=\binom{[n]}{k-1}$,  $L_2=\binom{[n]}{k}$, and $L_3=\binom{[n]}{k+1}$. Let $\cals = \calf \cap L_{1}$, $\calt = \calf \cap L_{2}$, and $\calu = \calf \cap L_{3}$.  We may assume that $|k-n/2|<n^{2/3}$ as a result of Lemma~\ref{lem:mid}.  Furthermore, it will be useful to assume that $|\cals|,|\calu|\leq\Nk$; otherwise we could delete at most $\binom{n}{k-1}-\Nk=O(n^{-1/3})\Nk=o(\Nk)$ members of $\cals$ and $O(n^{-1/3})\Nk=o(\Nk)$ members of $\calu$ to ensure that the resulting sets are at most $\Nk$.
Let $\Upsilon$ be the set of $3$-element chains contained in $L_1\cup L_2\cup L_3$,   and
$\Upsilon_i = \{ \sigma \in \Upsilon:  |\sigma\cap \calf| = i\}$, $i= 0, 1, 2, 3$.

\noindent
We count ordered pairs, one element is a member of $\calf$ and the other is a chain from $\Upsilon$.  That is, $\calx:=\{(F,\sigma) : F\in\calf, \sigma\in\Upsilon, F\in\sigma\}$.
Then
$$|\calx| = 3 |\Upsilon_3| + 2|\Upsilon_2| + |\Upsilon_1| = 2|\Upsilon| + |\Upsilon_3|-|\Upsilon_1| - 2|\Upsilon_0|  \leq  2|\Upsilon| + |\Upsilon_3|-|\Upsilon_1|.$$
On the other hand,
$$|\calx| = (k+1)k |\calu| + k(n-k)|\calt| + (n-k+1)(n-k) |\cals|.$$
 Putting together these  expressions for  $|\calx|$ and using the fact that
$|\Upsilon|  = \Nk k(n-k) $, we have
\begin{equation}\label{main-ineq}
(k+1)k |\calu| + k(n-k)|\calt| + (n-k+1)(n-k) |\cals|\leq   2\Nk k(n-k) + |\Upsilon_3|-|\Upsilon_1|.
\end{equation}

\noindent
For $X \in L_{1}$, $Y \in L_{2}$, and $Z \in L_{3}$, define
\begin{eqnarray}
f(X)=  |\{ T\in \calt:   X\subset T\}|;\  \quad  g(Z) = |\{ T\in \calt:   Z\supset T\}|; \nonumber \\
 \breve{f}(Y)=|\{ S\in \cals:   S\subset Y\}|;\  \quad
\breve{g}(Y)=|\{ U\in \calu:   U\supset Y\}|. \nonumber
\end{eqnarray}
Note that $\ds\sum_{X\in\cals}f(X)=\ds\sum_{Y\in\calt}\breve{f}(Y)  \quad \mbox{  and} \quad  \ds\sum_{Z\in\calu}g(Z)=\ds\sum_{Y\in\calt}\breve{g}(Y).$

\noindent
We shall bound $|\Upsilon_3|-|\Upsilon_1|$ by counting the chains that contain an element of $\calt$, $\cals$ and $\calu$, then chains containing an element of $\calt$, $L_1\setminus \cals$, $L_3\setminus \calu$.

\begin{eqnarray}
   |\Upsilon_3|-|\Upsilon_{1}|  & \leq & \sum_{Y\in\calt}\left[\breve{f}(Y)\breve{g}(Y)
   - \left(k-\breve{f}(Y)\right)\left(n-k-\breve{g}(Y)\right)\right]  \nonumber\\
   & = & (n-k)\sum_{X\in\cals}f(X)+k\sum_{Z\in\calu}g(Z)-|\calt| k(n-k). \label{upsilons}
\end{eqnarray}

\noindent
Now, we shall find a bound on  $\sum f$ and $\sum g$ in terms of $|\cals|$ and $|\calu|$.  Recall that we were able to assume that $|\cals|,|\calu|\leq\Nk$.

\begin{lem}\label{sumf}
If $\Nk=\binom{n}{k}$ and $|k-n/2|=O(n^{2/3})$, then
\begin{eqnarray*}
\sum_{X\in\cals}f(X)\leq (k+1)\sqrt{|\cals|\left(\Nk-|\calu|\right)}+O(n^{5/6}) \Nk, \\
\sum_{Z\in\calu}g(Z)\leq (n-k+1)\sqrt{|\calu|\left(\Nk-|\cals|\right)}+O(n^{5/6}) \Nk.
\end{eqnarray*}
\end{lem}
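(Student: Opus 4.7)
The plan is to bound the second moment $\sum_{X \in \cals} f(X)^2$ using the $Q_2$-free hypothesis, and then to extract a bound on the first moment $\sum_{X \in \cals} f(X)$ via Cauchy--Schwarz; the bound on $\sum_{Z \in \calu} g(Z)$ will follow by a dual argument.

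The key observation is a combinatorial reading of $\sum_{X \in \cals} f(X)(f(X)-1)$: it counts ordered triples $(X, T_1, T_2)$ with $X \in \cals$, $T_1 \neq T_2 \in \calt$, and $X \subset T_1, T_2$. Since $|X| = k-1$ and $T_1, T_2$ are distinct $k$-sets, any such triple must satisfy $X = T_1 \cap T_2$ and $|T_1 \cup T_2| = k+1$; and the $Q_2$-free hypothesis applied to $\{X, T_1, T_2, T_1 \cup T_2\}$ then rules out $T_1 \cup T_2 \in \calu$. Re-indexing by $Y := T_1 \cup T_2 \in L_3 \setminus \calu$, and using that each such $Y$ admits exactly $k(k+1)$ ordered pairs of distinct $k$-subsets, I would obtain $\sum_X f(X)(f(X)-1) \leq k(k+1)\bigl(\binom{n}{k+1} - |\calu|\bigr)$.

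Setting $F := \sum_X f(X)$, Cauchy--Schwarz gives $F^2 \leq |\cals|\sum_X f(X)^2 = |\cals|\bigl[\sum_X f(X)(f(X)-1) + F\bigr]$. Solving the resulting quadratic in $F$ and using $\sqrt{a^2+b} \leq a + \sqrt{b}$ yields $F \leq |\cals| + \sqrt{|\cals|\cdot k(k+1)\bigl(\binom{n}{k+1} - |\calu|\bigr)}$. The inequality $\sqrt{k(k+1)} \leq k+1$, the asymptotic $\binom{n}{k+1} = \Nk(1+O(n^{-1/3}))$ valid when $|k-n/2| = O(n^{2/3})$, and subadditivity of the square root then rearrange this into the claimed $F \leq (k+1)\sqrt{|\cals|(\Nk - |\calu|)} + O(n^{5/6})\Nk$; the $O(n^{5/6})\Nk$ error term absorbs both the additive $|\cals| \leq \Nk$ and a $\sqrt{|\cals|\cdot k^2 \cdot O(n^{-1/3})\Nk} = O(n^{5/6})\Nk$ contribution from the shift between $\binom{n}{k+1}$ and $\Nk$.

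The bound on $\sum_Z g(Z)$ proceeds by the same scheme with meet and join, and $\cals$ and $\calu$, interchanged. Now $\sum_Z g(Z)(g(Z)-1)$ counts ordered triples $(Z, T_1, T_2)$ with $Z \in \calu$, $T_1 \neq T_2 \in \calt$, and $T_i \subset Z$; these force $Z = T_1 \cup T_2$ and $X := T_1 \cap T_2 \in L_1$, after which $Q_2$-freeness forces $X \notin \cals$. Counting by $X \in L_1 \setminus \cals$, each $X$ contributes at most $(n-k+1)(n-k)$ pairs, and Cauchy--Schwarz together with $\binom{n}{k-1} = \Nk(1+O(n^{-1/3}))$ produces the second inequality. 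I expect the only real obstacle to be the error-term bookkeeping: keeping the $O(n^{5/6})\Nk$ clean while replacing $\binom{n}{k\pm 1}$ by $\Nk$ and passing through a square root. Nothing deeper than Cauchy--Schwarz and one application of the $Q_2$-free hypothesis is required.
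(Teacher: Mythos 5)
Your proposal is correct and is essentially the paper's argument: both rest on the same second-moment inequality $\sum_{X\in\cals} f(X)\bigl(f(X)-1\bigr) \le k(k+1)\bigl(\tbinom{n}{k+1}-|\calu|\bigr)$, obtained from the $Q_2$-free exclusion of the join $T_1\cup T_2$ from $\calu$, followed by Cauchy--Schwarz (the paper's Jensen step) and the same absorption of the $\tbinom{n}{k\pm1}$-versus-$\tbinom{n}{k}$ shift into the $O(n^{5/6})\tbinom{n}{k}$ error. The only cosmetic differences are that the paper derives the second-moment bound by double counting pairs $(X,U)$ with $U\in\calu$ and subtracting the $\tbinom{f(X)}{2}$ forbidden joins, and that it disposes of the linear term $\sum_X f(X)$ by the crude bound $\tbinom{n}{k-1}(n-k+1)$ rather than by solving a quadratic in the sum.
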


\begin{proofcite}{Lemma~\ref{sumf}}
Consider any $X\in L_1$.  One can associate members of $L_2$ lying above $X$ with the elements of $[n]-X$ that they contain.  Furthermore, one can associate members of $L_3$ lying above $X$ with the pairs of elements of $[n]-X$ that they contain.  So, for any $X\in L_1$, then $|\{U\in\calu : U\supset X\}|\leq\binom{n-k+1}{2}$.  But if $X\in\cals$, there are $\binom{f(X)}{2}$ members of $L_3$ that cannot be above $X$.  Hence, for each $X\in\cals$,
$$ |\{U\in\calu: U\supset X\}| \leq \binom{n-k+1}{2}-\binom{f(X)}{2} . $$
Symmetrically, for any $Z\in L_3$, $|\{S\in\cals : S\subset Z\}|\leq\binom{k+1}{2}$ but if $Z\in\calu$, then
$$ |\{S\in\cals: S\subset Z\}|\leq\binom{k+1}{2}-\binom{g(Z)}{2} . $$

\noindent
Now we double-count the pairs $(X,U)$ such that $X\in L_1$, $U\in\calu$ and $X\subset U$:
$$ \binom{k+1}{2}|\calu|=\sum_{X\in L_1}|\{U\in\calu : U\supset X\}| . $$
We can partition the members of $X\in L_1$ according to whether or not $X\in\cals$ and use the estimates above.  To wit,
\begin{eqnarray*}
   |\calu| & \leq & \binom{k+1}{2}^{-1}  \left(\sum_{X\in \cals} \left(\binom{n-k+1}{2} - \binom{f(X)}{2}\right) + \sum_{X\in L_1-\cals}   \binom{n-k+1}{2}\right) \\
   & = & \binom{k+1}{2}^{-1}\left(|L_1|\binom{n-k+1}{2}-\sum_{X\in\cals}\binom{f(X)}{2}\right) .
\end{eqnarray*}
Since $|L_1|=\binom{n}{k-1}$, then the first term simplifies to $\binom{n}{k+1}$.  Hence,
$$ |\calu|\leq\binom{n}{k+1}-\frac{1}{(k+1)_2}\sum_{X\in\cals}(f(X))_2 . $$

\noindent
Jensen's inequality allows us to bound $\sum_{X\in\cals}f^2(X)\geq\frac{1}{|\cals|}\left(\sum_{X\in\cals}f(X)\right)^2$.  Furthermore, since $f(X)\leq n-k+1$ and $|S|\leq\binom{n}{k-1}$,  $\sum_{X\in\cals}f(X)\leq\binom{n}{k-1}(n-k+1)$.
\begin{eqnarray*}
   |\calu| & \leq & \binom{n}{k+1}-\frac{1}{(k+1)_2|\cals|}\left(\sum_{X\in\cals}f(X)\right)^2+\binom{n}{k-1}\frac{n-k+1}{(k+1)_2} \\
   & = & \Nk\frac{n-k+1}{k+1}-\frac{1}{(k+1)_2|\cals|}\left(\sum_{X\in\cals}f(X)\right)^2 .
\end{eqnarray*}

\noindent
Rearranging the terms gives
$$ \left(\sum_{X\in\cals}f(X)\right)^2\leq (k+1)_2|\cals|\left(\Nk \frac{n-k+1}{k+1}-|\calu|\right) . $$
Now, we solve for the summation and make some easy estimates:
\begin{eqnarray*}
   \sum_{X\in\cals}f(X) & \leq & (k+1)\sqrt{|\cals|\left(\Nk-|\calu|\right)+|\cals|\frac{n-2k}{k+1} \Nk} \\
   & \leq & (k+1)\sqrt{|\cals|\left(\Nk-|\calu|\right)}+(k+1)\sqrt{|\cals|\frac{|n-2k|}{k+1} \Nk} \\
   & \leq & (k+1)\sqrt{|\cals|\left(\Nk-|\calu|\right)}+O(n^{5/6}) \Nk
\end{eqnarray*}

\noindent
Symmetrically, $\sum_{Z\in\calu}g(Z)\leq (n-k+1)\sqrt{|\calu|\left(\Nk-|\cals|\right)}+O(n^{5/6}) \Nk$, and this concludes the proof of Lemma~\ref{sumf}.
\end{proofcite}

\noindent
Returning to (\ref{main-ineq}) and using (\ref{upsilons}) we have:
\begin{eqnarray}
   \lefteqn{\hspace{-1in}(k+1)k |\calu| + k(n-k)|\calt| + (n-k+1)(n-k) |\cals|} \nonumber \\
   & \leq & 2 \binom{n}{k}  k(n-k) +  |\Upsilon_3| - |\Upsilon_1| \nonumber \\
   & \leq & 2\Nk k(n-k)+ (n-k)\sum_{X\in\cals}f(X)+k\sum_{Z\in\calu}g(Z)-|\calt| k(n-k) . \label{eq:kn_k}
\end{eqnarray}
As $|k-n/2|=O(n^{2/3})$ , we can utilize the estimates in Lemma \ref{sumf} to bound $\sum_{X\in\cals}f(X)$ and $\sum_{Z\in\calu}g(Z)$ and divide (\ref{eq:kn_k}) by $k(n-k)$ to get
$$ \frac{k+1}{n-k}|\calu|+|\calt|+\frac{n-k+1}{k}|\cals|
   \leq 2\Nk +\frac{k+1}{k}\sqrt{|\cals|\left(\Nk-|\calu|\right)} +\frac{n-k+1}{n-k}\sqrt{|\calu|\left(\Nk-|\cals|\right)}+O(n^{-1/6}) \Nk-|\calt| . $$

\noindent
The goal is to get $2|\calf|$ on the left-hand side of the inequality. What this enables us to do is to eliminate $|\calt|$ from the right-hand side.  We may disregard all small-order terms because they are of magnitude at most $O(n^{-1/6}) \Nk$:
\begin{eqnarray*}
   2|\calu|+2|\calt|+2|\cals| & \leq & 2\Nk +|\calu| +|\cals| +\sqrt{|\cals|\left(\Nk-|\calu|\right)} +\sqrt{|\calu|\left(\Nk-|\cals|\right)}+O(n^{-1/6}) \Nk \\
   & \leq & \frac{3+\sqrt{2}}{2}\Nk+O(n^{-1/6}) \Nk .
\end{eqnarray*}

\noindent
Here the last inequality is  obtained by maximizing function $f(u,s)=2+\sqrt{s(1-u)}+\sqrt{u(1-s)}+u+s$, $0\leq u,s\leq 1$. The maximum occurs when $s=u=(2+\sqrt{2})/4$.

\noindent
Therefore,
\begin{equation}\label{eq:nkbound}
   |\calf|\leq\frac{3+\sqrt{2}}{2}\Nk+o(\Nk)\leq 2.20711\Nk+o(\Nk) .
\end{equation}

\noindent
Consider now a more general setting. Recall that $N=\binom{n}{n/2}$.  Let $\calf$ be a $Q_2$-free family of sets in $Q_n$.
Let $\calf = \cals \cup \calt \cup \calu$, where $\cals\subset\binom{[n]}{k_\cals}$,  $\calt\subset\binom{[n]}{k}$ and $\calu\subset\binom{[n]}{k_\calu}$, where $k_\cals<k<k_\calu$.  We may assume that $n/2-n^{2/3}<k_\cals<k<k_\calu<n/2+n^{2/3}$.  Otherwise, by Lemma~\ref{lem:mid}, at least one of $\cals$, $\calt$ or $\calu$ has size $o(N)$ and so $|\calf|\leq (2+o(1))N$.

\noindent
Consider a Symmetric Chain Decomposition of $Q_n$ (see Greene and Kleitman~\cite{GK} for the existence of such a decomposition) and, in particular, the $\Nk$ disjoint chains that contain elements of $\binom{[n]}{k}$, call them $P_1,\ldots,P_{\Nk}$.  We can create a new family $\calf'=\calu'\cup\calt\cup\cals'$ such that we shift $\cals$ and $\calu$ to the layers directly below and above $\calt$, respectively, along each chain  $P_i$. Formally, let
\begin{eqnarray*}\
   \cals' & = & \left\{P_i\cap\binom{[n]}{k-1} : \mbox{ there is }S\in\cals\cap P_i, i=1,\ldots,q\right\} , \\
   \calu' & = & \left\{P_i\cap\binom{[n]}{k+1} : \mbox{ there is }U\in\calu\cap P_i, i=1,\ldots,q\right\} .
\end{eqnarray*}

\noindent
Note that $\calf'$ is $Q_2$-free and consists of three consecutive layers.  Thus, the inequality (\ref{eq:nkbound}) gives that $|\calf'|\leq \frac{3+\sqrt{2}}{2}\Nk+o(\Nk)$.

\noindent
There might be unshifted elements, but not too many.  In fact, both $|\cals|-|\cals'|$ and $|\calu|-|\calu'|$ are at most $N-\Nk$.  So,
\begin{eqnarray*}
   |\calf| & = & |\calf'|+\left(|\cals|-|\cals'|\right)+\left(|\calu|-|\calu'|\right) \\
   & \leq & \frac{3+\sqrt{2}}{2}\Nk+o(\Nk)+2(N-\Nk) \\
   & = & \left(\frac{\sqrt{2}-1}{2}\right)\Nk+2N+o(N) \\
   & \leq & \left(\frac{\sqrt{2}-1}{2}\right)N+2N+o(N) \\
   & \leq & \frac{3+\sqrt{2}}{2} N+o(N)\approx 2.20711 N+o(N) .
\end{eqnarray*}\qed


\section{Proofs of Lemmas}\label{LEMMAS}

\subsection{Proof of Lemma \ref{lem:sig1}}
In order to find the lower bound on $|\Upsilon_1|$, we shall consider $\Upsilon_1'= \{ \sigma\in \Upsilon_1:   \emptyset \in \sigma\}$; i.e., the set of  full chains in $Q_m$  containing
only $\emptyset$ and no other sets from $\calg$.

\noindent
Recall that $\calg =  \{\emptyset\} \cup \calt \cup \calu $, where $\calt$ are the minimal elements of $\calg - \{\emptyset\}$, and $\calt = \calt_{1} \cup \calt_{2}\cup \cdots$, where $\calt_{i}$ is the family of sets from $\calt$ of size $i$.  Without loss of generality, the   one-element members  of $\calt$ are  $\{\eta+1\}, \{\eta+2\}, \ldots, \{m\} $,
which correspond to $1$-element subsets of  $[m]-[\eta]$. A graph $G$ is defined on vertex set $[\eta]$ with edges corresponding to sets from $\calt_2$,  $e=|E(G)|, \overline{e} = |E(\overline{G})|$,
$d(v), \overline{d}(v)$ is the degree of $v$ in $G$ and $\overline{G}$, respectively.

\noindent
Let $x\in [\eta], t \in [m]-[\eta]$.

\noindent
If $\{x,t\} \in \calg$ then denote $\calc_1(x, t)$  to be the set  of full   chains of the form $\emptyset, \{x\}, \{x,y\}, \{x,y,t\},\ldots$, where $y \in [\eta]$,  $\{x,y\}\not\in \calg$.
We have that $\calc_1(x, t) \subseteq \Upsilon_1'$ and $|\calc_1(x, t) | \geq (m-3)!\overline{d} (x)$.

\noindent
If $\{x, t\} \not\in \calg$ then denote $\calc_2(x,t) $ to be the set of full   chains of the form $\emptyset, \{x\}, \{x,t\},\ldots$  unless such a chain passes through $A\cup\{t\}$ where $A\cup\{t\}\in \calg$.
We have that $\calc_2(x, t) \subseteq \Upsilon_1'$ and $|\calc_2(x, t) | \geq (m-2)! -   (m-3)!\overline{d} (x)$.

\noindent
Observe also for any $t, t' \in [m]-[\eta]$,  $\calc_1(x,t) \cap \calc_1(x,t') = \emptyset$ and  $\calc_2(x, t)\cap \calc_2(x, t') = \emptyset$.
Thus for each $x\in [\eta]$, the number of chains in $\Upsilon_1'$ passing through $x$ is at least
$$\sum_{\scriptsize \begin{array}{l} t\in[m]-[\eta] \\ \{x,t\}\in\calg\end{array}}|\calc_1(x,t)|+\sum_{\scriptsize \begin{array}{l} t\in[m]-[\eta] \\ \{x,t\}\not\in\calg\end{array}}|\calc_2(x,t)| \geq  \sum_{t\in [m]-[\eta] }\min \{ (m-3)! \overline{d}(x),   (m-2)! -   (m-3)!\overline{d} (x)\}.$$
  Thus,
\begin{eqnarray}
    |\Upsilon_1| \geq |\Upsilon_1'|
    & \geq &   \sum_{x\in [\eta]} \sum_{t\in [m]-[\eta] } \min \{ (m-3)! \overline{d} (x),   (m-2)! -   (m-3)!\overline{d} (x)\}  \nonumber\\
      & = & (m-\eta)(m-3)!\left(2\ove -\sum_{x\in [\eta]}\max\left\{0,2\overline{d} (x)-m+2\right\}\right).\label{star}
         \end{eqnarray}
\noindent
Consider the  set $D$ of all sequences of $\eta$ nonnegative real numbers which are at most $\eta-1$,  and which add up to $2\ove$.
Note that the degree sequence of $\overline{G}$ is in $D$. Thus,
\begin{eqnarray}
\sum_{x \in [\eta]}\max \{0,2\overline{d} (x) - m+2\}
& \leq &  \max_{(d_1, \ldots, d_n)\in D}  \sum _{i=1}^{\eta}  \max \{0, 2d_i - m + 2\}\nonumber \\
& \leq & \frac{2\ove}{\eta-1} (2\eta -2 -m +2).\nonumber
\end{eqnarray}
\noindent
Returning to (\ref{star}), and recalling that $a= \eta/m$, and
\begin{eqnarray*}
 \mu=\begin{cases}
 1,& a < 1/2, \\
 \frac{1-a}{a},&   a \geq 1/2,
 \end{cases}
\end{eqnarray*}
 we have
\begin{eqnarray*}
(m-\eta)(m-3)!\left(2\ove -\sum_{x\in [\eta]}\max\left\{0,2\overline{d} (x)-m+2\right\}\right)  &\geq &
\begin{cases}
 (m-\eta)(m-3)!2\ove,  &  2\eta \leq m-2; \nonumber\\
  (m-\eta)(m-3)!   2\ove \left[  \frac{m-\eta-1}{\eta -1}  \right], & 2\eta > m-2\nonumber
  \end{cases}\\
  & \geq &
  (m-\eta)(m-3)!2\ove(\mu-O(m^{-1})).
     \end{eqnarray*}

 \noindent
 Therefore,  since $b= e/ \binom{\eta}{2}$,
 \begin{eqnarray*}
|\Upsilon_1| & \geq &  (m-\eta)(m-3)! 2\ove(  \mu- O(m^{-1}))\\
& = & m! \frac{m-\eta}{m (m-1)(m-2)} \eta^2 (1-b) (\mu -O(m^{-1})) \\
& = & m!\left[   b(a^3-a^2)\mu  + (a^2-a^3)\mu  - O(m^{-1})\right].
\end{eqnarray*}\qed


\subsection{Proof of Lemma \ref{sigma3}}
 For each $T\in\calt$, let $\calu_T=\{U\in\calu : U\supset T\}$ and let
 $$\calu_T'=\{V\supset T : ~ |V|=|T|+1,  ~\not\exists T_0\in\calt-\{T\}, ~T_0\subset V\}.$$
 We say that $\calg'=\emptyset\cup\calt\cup\bigcup_{T\in\calt}\calu_T'$ is a \textbf{compressed family}.

\noindent
We have that $|\Upsilon_m^3(\emptyset, \calg) | \leq |\Upsilon_m^3(\emptyset, \calg')|$.
Indeed, if a chain contains both $T\in\calt$ and $U\in\calu$, then there is some $U'\in\calu_T'$ that this chain contains also.

\noindent
Let  $\Upsilon_3' = \Upsilon_m^3(\emptyset, \calg')$.
Recall that $\calt = \calt_1\cup \calt_2 \cup \cdots$, where $\calt_i$ is the family of sets from $\calt$ of size $i$.
To bound $| \Upsilon_3'|$, we count first the number of full chains from $\Upsilon_3'$  containing sets from $\calt_1$,  then those containing sets from $\calt_2$,
and finally those containing sets from $\calt_i$, $i\geq 3$.

\noindent
Recall that the graph $G$ is defined on vertex set $[\eta]$ with edges corresponding to sets from $\calt_2$.
Let $\alphaone$ be the number of triples from $[\eta]$ which  induce exactly one edge in $G$.
 Let $B_0$ be the set of $4$-element sets from $[\eta]$ which do not induce an edge in $G$, and let $\beta_0= |B_0|$.

\noindent
There are at most $(m - \eta)\eta(m-2)!$ chains from $\Upsilon_3'$ containing sets from $\calt_1$.
There are $2\alphaone (m-3)!$ such chains containing sets from $\calt_2$.
We need to do some more work to bound the number of chains from $\Upsilon_3'$ containing sets from $\calt_i$, $i\geq 3$.
Call the set of such chains $Y$.

\noindent
 Recall that if $T, U \in \calg'$,  $T\subseteq U$, $T\neq \emptyset$, then
the number of full chains through $T$ and $U$ is $|T|!(|U-|T|)!(m-|U|)!$.
Since $\calg'$ is a compressed family, we have that $|T|=|U|-1$.
Moreover, if $T, U$ belong to a chain in $Y$, we have that $|U| \geq 4$.  Let
$$\calu^* = \{ U\in \calg'\setminus\calt : U \in C\in Y\}.$$
\noindent
Since for each $U \in \calg'$ there is at most one $T \in \calg'$, $T\neq \emptyset$ such that $T\subseteq U$, we have
$$|Y| = \sum_{U\in \calu^*}  \sum_{T\subseteq U,  T\in \calt}  |T|!(m-|U|)!\leq \sum_{U\in \calu^*} (|U|-1)!(m-|U|)!  =  \sum_{U\in \calu^*} \frac{1}{|U|} |U|!(m-|U|)! \leq  \frac{1}{4} \sum_{U\in \calu^*}  |U|!(m-|U|)!.$$
 The last  summation counts the number of full chains   containing a set from $\calu^*$.
Since for each $U \in \calu^*$, there is $B \in B_0$, $B\subseteq U$, we have
 that the number of  full chains passing through sets in  $\calu^*$ is at most the
number of full chains passing through  $B_0$ sets. Thus
     $$|Y|  \leq \frac{1}{4} \sum_{B\in B_0}  |B|!(m-|B|)!\leq  \frac{1}{4} \sum_{B\in B_0}  |4|!(m-|4|)! = \frac{1}{4} 4!(m-4)! |B_0| \leq 3!(m-4)! \beta_0. $$

\noindent
So, we have that 
\begin{equation} \label{UB-3}
|\Upsilon_3| \leq |\Upsilon_3'| \leq  (m-\eta)\eta (m-2)! + 2 (m-3)! \alphaone+ 3!(m-4)! \beta_0.
\end{equation}

\noindent
To bound the last two terms, we use the following lemma.

\begin{lem}\label{lem:graph}  With $\alpha_1$ and $\beta_0$ defined as above for $G$,  an  $\eta$-vertex graph, and $a=\eta/m$,
$$ \alpha_1+\frac{3}{m-3}\beta_0 \leq\frac{\eta^3}{8} +\frac{e^2}{\eta}(a-2) +\frac{1}{4}e\eta(4-3a) -\frac{1}{8}(1-a)\eta^3 +O(m^2).$$
\end{lem}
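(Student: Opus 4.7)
The plan is to pass to the complement graph $\overline{G}$, reinterpret $\alpha_1$ and $\beta_0$ there as cherry and $K_4$ counts, and then bound the resulting expression using classical extremal inequalities for triangles and cherries.

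First, observe that a 3-subset of $[\eta]$ has exactly one edge in $G$ if and only if it has exactly two edges in $\overline{G}$. Using the standard identity $f_2^{(3)}(H) = P_2(H) - 3K_3(H)$ for any graph $H$ (where $P_2(H) = \sum_v \binom{d_H(v)}{2}$ is the cherry count), this gives $\alpha_1 = P_2(\overline{G}) - 3K_3(\overline{G})$. Similarly, a $4$-set induces no edge in $G$ precisely when it is a $K_4$ in $\overline{G}$, so $\beta_0 = K_4(\overline{G})$. I would also keep in mind the equivalent in-$G$ expression $\alpha_1 = e\eta - \sum_v d(v)^2 + 3t$, obtained by the double count $\alpha_1 = \sum_{uv\in E}(\eta - d(u) - d(v) + c(uv))$ over edges of $G$, which together with Cauchy-Schwarz gives $\alpha_1 \le e\eta - 4e^2/\eta + 3t$.

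Second, I would bound $\beta_0$ by the elementary double-counting inequality
\[
4 K_4(\overline{G}) \le (\eta - 3)\, K_3(\overline{G}),
\]
which follows because each $K_4$ contains four triangles while each triangle extends to at most $\eta - 3$ copies of $K_4$. Plugging this in,
\[
\alpha_1 + \frac{3\beta_0}{m-3} \le P_2(\overline{G}) - \Bigl(3 - \frac{3(\eta-3)}{4(m-3)}\Bigr) K_3(\overline{G}) = P_2(\overline{G}) - \bigl(3 - \tfrac{3a}{4}\bigr) K_3(\overline{G}) + O(m^2).
\]

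Third, I would bound the right-hand side in terms of $e$ and $\eta$ alone. For $K_3(\overline{G})$ I would invoke the Moon-Moser (Goodman) bound $K_3(\overline{G}) \ge \overline{e}(4\overline{e} - \eta^2)/(3\eta)$, where $\overline{e} = \binom{\eta}{2} - e$; this is sharp for quasi-random $\overline{G}$. For $P_2(\overline{G})$ I would use the identity $2P_2(\overline{G}) = \sum_v \overline{d}(v)^2 - 2\overline{e}$ together with the Cauchy-Schwarz estimate $\sum_v \overline{d}(v)^2 \ge 4\overline{e}^2/\eta$, which becomes equality for regular $\overline{G}$—the same graphs at which Moon-Moser is sharp. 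Substituting these bounds, expanding $\overline{e} = \binom{\eta}{2} - e$, and regrouping terms by the parameter $a = \eta/m$ should produce precisely the claimed expression up to an $O(m^2)$ remainder.

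The main obstacle is that Moon-Moser becomes vacuous when $4\overline{e} < \eta^2$ (i.e.\ when $G$ itself is dense), so a uniform proof requires handling two regimes: in the sparse-$G$ regime Moon-Moser does the work, while in the dense-$G$ regime one must fall back on $K_3(\overline{G}) \ge 0$ together with a tighter control of $P_2(\overline{G})$ via the small edge count of $\overline{G}$. The algebraic verification that the right-hand side dominates in both regimes can be checked against the two extremal configurations where the bound is tight—namely $\overline{G} = K_\eta$ (i.e.\ $G$ empty) and $\overline{G} = K_{\eta/2,\eta/2}$—which pins down that Cauchy-Schwarz plus Moon-Moser yields exactly the stated inequality.
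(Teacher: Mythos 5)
Your reduction to the complement is sound: $\alpha_1=\sum_v\binom{\overline{d}(v)}{2}-3K_3(\overline{G})$, $\beta_0=K_4(\overline{G})$, and the incidence count $4K_4(\overline{G})\le(\eta-3)K_3(\overline{G})$ together with $\frac{\eta-3}{m-3}\le a$ do give
$\alpha_1+\frac{3}{m-3}\beta_0\le P_2(\overline{G})-\bigl(3-\tfrac{3a}{4}\bigr)K_3(\overline{G})$.
The genuine gap is in your last step. You need an \emph{upper} bound on $P_2(\overline{G})$, but Cauchy--Schwarz, $\sum_v\overline{d}(v)^2\ge 4\overline{e}^2/\eta$, bounds it from \emph{below}; substituting the regular-graph value of $P_2(\overline{G})$ is not a legitimate estimate, and the observation that this value and Moon--Moser are tight on the same graphs does not make the joint substitution valid --- you would need the deviations from regularity in the two terms to cancel with the right sign, which is exactly what is unproved. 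Your proposed fallback for the dense-$G$ regime (use $K_3(\overline{G})\ge 0$ and control $P_2(\overline{G})$ by $\overline{e}$ alone) also fails: take $a=1$ and $\overline{G}$ a clique on roughly $0.7\eta$ vertices, so $4\overline{e}<\eta^2$; then $P_2(\overline{G})\approx 0.17\eta^3$ while the right-hand side of the lemma is about $0.124\eta^3$, so the negative triangle term cannot be discarded there.

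The argument can be repaired, and then it is a genuinely different (and shorter) route than the paper's. Use Goodman's bound in degree form rather than its edge-only Moon--Moser corollary: $3K_3(\overline{G})=\sum_{uv\in E(\overline{G})}|N(u)\cap N(v)|\ge\sum_v\overline{d}(v)^2-\eta\overline{e}$. Then
$P_2(\overline{G})-\bigl(3-\tfrac{3a}{4}\bigr)K_3(\overline{G})\le\frac{a-2}{4}\sum_v\overline{d}(v)^2+\bigl(1-\tfrac{a}{4}\bigr)\eta\overline{e}-\overline{e}$,
and since $\frac{a-2}{4}<0$ Cauchy--Schwarz now applies in the correct direction; plugging in $\sum_v\overline{d}(v)^2\ge 4\overline{e}^2/\eta$ and $\overline{e}=\binom{\eta}{2}-e$ and expanding yields exactly $\frac{a\eta^3}{8}+\frac{e^2}{\eta}(a-2)+\frac{1}{4}e\eta(4-3a)+O(m^2)$, which is the stated bound since $\frac{\eta^3}{8}-\frac{(1-a)\eta^3}{8}=\frac{a\eta^3}{8}$. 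By contrast, the paper never complements globally: it expands $\alpha_1(\eta-3)$ and $\binom{\eta}{4}$ into the counts $\beta_0,\beta_1,\betapar,\betawedge,\betatri,\betaw,\betapath$ and their complements, uses four degree-sum identities to eliminate them, and only then reaches an expression with a negative coefficient on $\sum_v d(v)^2$. Your scheme, once the triangle bound is taken in degree form, collapses that bookkeeping into the single identity for $\alpha_1$ plus Goodman's inequality.
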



\begin{proof}
Let $\betai=\betai(G)$ be the number of $4$-element subsets of the vertex set of $G$ spanning exactly $i$ edges.  Moreover, let
$$\betatwo(G)=\betapar(G)\cup\betawedge(G), \quad   \betathree(G)=\betatri(G)\cup\betaw(G)\cup\betapath(G),$$    where
$\betapar$, and $\betawedge$ count  such subsets inducing  two disjoint edges, and  two adjacent edges, respectively;
$\betatri$,  $\betaw$, and $\betapath$  count   the number of such subsets inducing triangle,  a star of three edges, and a path with three edges,  respectively.
We also denote ${\Obeta_i}(G)= \beta_i(\overline{G})$,   ${ \Obetapar}(G) = \betapar(\overline{G})$,  ${\Obetawedge}(G) = \betawedge(\overline{G})$,
${\Obetatri}(G)= \betatri(\overline{G})$,   ${  \Obetaw}(G) = \betaw(\overline{G})$,   ${ \Obetapath}(G) = \betapath(\overline{G})$.
Note that ${\Obetatri}(G)= \betaw (G)$,   ${\Obetapath}(G) = \betapath(G)$.
Let $d(v)$ and $\overline{d}(v)$ be the degree of $v$ in $G$ and in $\overline{G}$, respectively. Then,
 \begin{eqnarray*}
\sum_{v\in V} d(v) \binom{\overline{d}(v)}{2} & = &
2\betaone+4\betapar+2\betawedge+2\betapath+3\betaw+\Obetawedge, \label{eq:dee} \nonumber\\
\sum_{v\in V} \binom{d(v)}{3} & = &
\betaw+\Obetawedge+2\Obetaone+4\Obetazero, \label{eq:ddd} \nonumber\\
\sum_{v\in V} \overline{d}(v) \binom{d(v)}{2} & = &
2\Obetaone+4\Obetapar+2\Obetawedge+2\Obetapath+3\Obetaw+\betawedge,   \label{eq:edd} \nonumber\\
\sum_{v\in V} \binom{\overline{d}(v)}{3} & = &
 \Obetaw+\betawedge+2\betaone+4\betazero. \label{eq:eee}\nonumber
\end{eqnarray*}

\noindent
Observe that the last two equations are complementary of the first two.  In order  to bound $\alphaone+\frac{3}{m-3}\betazero$,
we shall express everything  in terms of $\beta$s,  and then in terms of $e$.

\noindent
Since
$$ \alpha_1(\eta-3)=2\betaone+4\betapar+2\betawedge+3\betatri+2\betapath+\Obetawedge, $$
and
$$ \binom{\eta}{4} = \beta_0+\beta_1+\beta_2+\beta_3+\beta_4+\beta_5+\beta_6
=\beta_0+\beta_1 +(\betapar+\betawedge) +(\betatri+\betapath+\betaw) +(\Obetapar+\Obetawedge)+\Obetaone+\Obetazero,$$
we have, by recalling that $a={\eta}/{m}>(\eta-3)/(m-3)$,

\begin{eqnarray*}
Q & := & (\eta-3)\left[\alphaone+\frac{3}{m-3}\betazero\right] \\
& = &  3\frac{\eta-3}{(m-3)}\betazero+2\betaone+4\betapar+2\betawedge+3\betatri+2\betapath+\Obetawedge \\
& < & 3\binom{\eta}{4} +(3a-3)\betazero +(-1)\betaone +1\betapar+(-1)\betawedge +0\betatri+(-1)\betapath+(-3)\betaw \\
& & +(-2)\Obetawedge+(-3)\Obetapar +(-3)\Obetaone +(-3)\Obetazero .
\end{eqnarray*}

\noindent
At the price of slightly increasing the right hand side, we collect the terms in order to utilize the various formulas that sum the degrees:
\begin{eqnarray*}
%
%
%
Q & \leq & 3\binom{\eta}{4} +\dfrac{1}{4}\left[2\betaone +4\betapar +2\betawedge +2\betapath +(1-3) \Obetawedge -6\Obetaone -12\Obetazero\right] \\
& & +\dfrac{(1-a)}{4}\left[2\Obetaone +4\Obetapar+2\Obetawedge +2\betapath+(1-3)\betawedge -6\betaone -12\betazero\right] \\
& = & 3\binom{\eta}{4} +\frac{1}{4}\left[\sum_v d(v)\binom{\overline{d}(v)}{2}-3\sum_v \binom{d(v)}{3}\right] +\frac{1-a}{4}\left[\sum_v \overline{d}(v)\binom{d(v)}{2}-3\sum_v \binom{\overline{d}(v)}{3}\right] \\
& = & 3\binom{\eta}{4} +\frac{\eta-3}{8}\left[\sum_v d(v)(\eta-2d(v))\right] +(1-a)\frac{(\eta-3)}{8}\left[\sum_v \overline{d}(v)(\eta-2\overline{d}(v))\right] .
%
%
\end{eqnarray*}

\noindent
We can use the fact that $\overline{d}(v)=\eta-d(v)-1$ and collect terms
$$ Q \leq
3\binom{\eta}{4} +\frac{\eta-3}{8}(-4+2a)\sum_v d^2(v) +\frac{\eta-3}{8}\left(4\eta-4-3a\eta+4a\right) \sum_v d(v) +\frac{\eta-3}{8}(a-1)\eta(\eta-1)(\eta-2) . $$

\noindent
Using the fact that $\sum_v d(v)=2e$ and $\sum_v d^2(v) \geq 4e^2/\eta$,
$$ Q\leq\frac{\eta^4}{8} +e^2(a-2) +\frac{1}{4}e\eta^2(4-3a) -\frac{1}{8}(1-a)\eta^4 + O(\eta^3). $$

\noindent
Dividing $Q$ by $\eta-3$ and observing that $\eta\leq m$, this concludes the proof of Lemma \ref{lem:graph}.
\end{proof}

\noindent
Now, we return to the upper bound  $(\ref{UB-3})$  on $|\Upsilon_3|$, recalling that $a= \eta/m $ and $ e = b \binom{\eta}{2}$,
$$ |\Upsilon_3|\leq (m-\eta)\eta(m-2)! +2(m-3)!\alphaone +3!(m-4)!\beta_0 . $$

\noindent
Because of Lemma~\ref{lem:graph},
\begin{eqnarray*}
|\Upsilon_3| & \leq & m!\left[a(1-a) +\frac{2}{m^3}\left(\frac{\eta^3}{8} +\frac{e^2}{\eta}(a-2) +\frac{1}{4}e\eta(4-3a) -\frac{1}{8}(1-a)\eta^3 +O(m^{-1})\right)\right]\\
& = & m!\left[b^2\left(a^4/2-a^3\right) +b\left(a^3-3a^4/4\right) +\left(a^4/4-a^2+a\right) +O(m^{-1})\right].
\end{eqnarray*}
This concludes the proof of Lemma \ref{sigma3}.
\qed

\subsection{Proof of Lemma~\ref{lem:calc}}
The estimations here can be checked by a symbolic manipulation program.

\noindent
Set
$$ Q':=b^2\left(a^4/2-a^3\right)+b\left(a^3-3a^4/4-a^3\mu+a^2\mu\right) +\left(a^4/4-a^2+a-a^2\mu+a^3\mu\right) . $$

\noindent
If $ 0\leq a<1/2$ we have that $\mu=1$ and
$$ Q'=b^2\left(a^4/2-a^3\right) +b\left(-3a^4/4+a^2\right) +\left(a^4/4+a-2a^2+a^3\right)\leq 0.25 , $$
which is achieved when $b=1$ and $a=1/2$.

\noindent
If $1/2\leq a\leq 1$ we have that $\mu=(1-a)/a$ and
$$ Q'=b^2\left(a^4/2-a^3\right) +b\left(2a^3-3a^4/4-2a^2+a\right) +\left(a^4/4+a^2-a^3)\right)<0.283261 . $$
The maximum is achieved when $a\approx 0.935$ and $b\approx 0.285$.\qed

\section{Conclusions}
The method we use is local, it allows us  to count the number of full chains with three or one element in $\calf$.
Using this method, one could not get a bound better than $2.25N$ for $ex(n, Q_2)$.
To see this, consider a set system with elements from $[m]$, where $m$ is even and
$[m] = M_1\cup M_2$, $ M_1 = [m/2], M_2 =  \{m/2+1, m/2+2, \ldots, m\}$.
$\calg =  \{\emptyset\}\cup \calt \cup \calu$, where
$$\calt = \binom{M_1}{2}\cup  \binom{M_2}{2}, $$
$$\calu =  \{ \{a, b, c\} :  a, b \in M_2, c\in M_1 \} \cup \{ \{a,b,c\}: a, b \in M_1, c\in M_2\}.$$
We have that the number of full chains in $Q_m$  containing three elements of $\calg$  is
at  $4\binom{m/2}{2}m/2 (m-3)!$. On the other hand, each full chain contains at least one nonempty set from $\calf$.
Thus $|\Upsilon_3|\geq m!/4$.

\section{Acknowledgments}
The authors are grateful to Jerry Griggs and Roger Maddux for comments on the proof.  We also thank anonymous referees for their hard work and careful reading.

\bibliographystyle{plain}
\bibliography{myrefs}

\end{document}